\newcolumntype{2}{D{.}{}{2.0}}
  \def\<{{\langle}} 
  \def\>{{\rangle}}
  \def\note#1{{}}
  \def\note#1{} 
  \def\End#1#2{{{\rm End}\sb{#1}(#2)}}
  \def\beq{\begin{equation}} 
  \def\eeq{\end{equation}}
  \def\id{\mathrm{id}}
  \newcounter{zlist} 
  \newenvironment{zlist}{\begin{list}{(\arabic{zlist})}{ 
  \usecounter{zlist}\leftmargin2.5em\labelwidth2em\labelsep0.5em 
  \topsep0.6ex
  \parsep0.3ex plus0.2ex minus0.1ex}}{\end{list}}
  \newcounter{blist} 
  \newenvironment{blist}{\begin{list}{(\alph{blist})}{ 
  \usecounter{blist}\leftmargin2.5em\labelwidth2em\labelsep0.5em 
  \topsep0.6ex 
  \parsep0.3ex plus0.2ex minus0.1ex}}{\end{list}} 
  \newcounter{rlist}
\def\stac#1{\raise-.2cm\hbox{$\stackrel{\displaystyle\otimes}{\scriptscriptstyle{#1}}$}}
\def\cten#1{\raise-.2cm\hbox{$\stackrel{\displaystyle\widehat{\otimes}}
{\scriptscriptstyle{#1}}$}}
  \def\Label#1{\label{#1}\ifmmode\llap{[#1] }\else 
  \marginpar{\smash{\hbox{\tiny [#1]}}}\fi} 
  \def\Label{\label}
  \newtheorem{proposition}{Proposition}[section]
  \newtheorem{lemma}[proposition]{Lemma} 
  \newtheorem{corollary}[proposition]{Corollary} 
  \newtheorem{theorem}[proposition]{Theorem} 
\theoremstyle{definition} 
  \newtheorem{definition}[proposition]{Definition}
  \newtheorem{example}[proposition]{Example}
  \theoremstyle{remark} 
  \newtheorem{remark}[proposition]{Remark}
  \newcounter{c} 
  \newcommand{\etyk}[1]{\vspace{-7.4mm}$$\begin{equation}\Label{#1} 
  \addtocounter{c}{1}} 
  \renewcommand{\]}{\ifnum \value{c}=1 $$\else \end{equation}\fi} 
   \numberwithin{equation}{section}
\newcommand{\Ker}{{\rm Ker}\,}
\def\FF{{\mathbb F}}
\def\NN{{\mathbb N}}
\def\QQ{{\mathbb Q}}
\def\ZZ{{\mathbb Z}}
\newcommand{\oO}{\mathrm{O}}
\newcommand{\tT}{\mathrm{T}}
\newcommand{\uU}{\mathrm{U}}
\newcommand{\Cc}{\mathcal{C}}
\def\*C{{}^*\hspace*{-1pt}{\Cc}}
\def\text#1{{\rm {\rm #1}}}
 \def\1{\mathbf{1}}
\def\id{\mathrm{id}}
\def\lto{\longmapsto}
\def\lra{\longrightarrow}
\def\1\mathbf{1}
\begin{document}

\title[Congruence classes and extensions of rings]{On congruence  classes and extensions of rings with applications to braces}

\author{Tomasz Brzezi\'nski}

\address{
Department of Mathematics, Swansea University, 
Swansea University Bay Campus,
Fabian Way,
Swansea,
  Swansea SA1 8EN, U.K.\ \newline \indent
Department of Mathematics, University of Bia{\l}ystok, K.\ Cio{\l}kowskiego  1M,
15-245 Bia\-{\l}ys\-tok, Poland}

\email{T.Brzezinski@swansea.ac.uk}

\author{Bernard  Rybo{\l}owicz}

\address{
Department of Mathematics, Swansea University, 
Swansea University Bay Campus,
Fabian Way,
Swansea,
  Swansea SA1 8EN, U.K.}

\email{Bernard.Rybolowicz@swansea.ac.uk}

\subjclass[2010]{16Y99; 08A99}

\keywords{Truss; heap; ring; brace; module}

\begin{abstract}
Two observations in support of the thesis that trusses are inherent in ring theory are made. First, it is shown that every equivalence class of a congruence relation on a ring or, equivalently, any element of the quotient of a ring $R$ by an ideal $I$ is a paragon in the truss $\mathrm{T}(R)$ associated to $R$. 
Second, an extension of a truss by a one-sided module is described. Even if the extended truss is associated to a ring, the resulting object is a truss, never a ring, unless the module is trivial. On the other hand, if the extended truss is associated to a brace, the resulting truss is also associated to a brace, irrespective of the module used.
\end{abstract}
\date\today
\maketitle

\section{Introduction}
The aim of this paper is to demonstrate that points of a quotient ring, understood as equivalence classes of the relation defined by an ideal, and extensions of rings by one-sided modules have a rich algebraic structure which leads one from the realm of rings to that of trusses. It has been observed first by Pr\"ufer \cite{Pru:the} in the abelian case and then by Baer \cite{Bae:ein} and Su\v skevi\v c \cite{Sus:the} in general that every group can be understood as a herd ({\em Schar}) or heap ({\em Gruda}), i.e.\ a set with an associative ternary operation that satisfies Mal'cev identities. In contrast to groups no choice of a specific element is made in a heap, indeed any point of a heap can play the role of the neutral element of the associated group, known as a {\em retract} of the heap.  This allows one to view an empty set as a heap (which is one of the indications that the notion of a heap generalises that of a group) and to understand heaps as an affine version of groups. Recently, a similarly affine version (or generalisation) of a ring has been introduced in \cite{Brz:tru} and termed a {\em truss}. The original motivation for trusses comes from exciting progress in the study of sets with two interacting group structures introduced by Rump  \cite{Rum:bra}, and called {\em braces} (see also \cite{CedJes:bra}, \cite{GuaVen:ske}), but the simplicity of the definition of a truss and both similarity and unexpected dissimilarity between rings and trusses, makes the latter an interesting object of studies on their own \cite{Brz:par}, \cite{BrzRyb:mod}. In this paper, we show two instances in which trusses enter into most natural ring-theoretic considerations and, indeed, appear to be indispensable for a full understanding of these constructions.

More specifically, Certaine observed in \cite{Cer:ter} that every coset in a group while not a subgroup in general is a sub-heap of the heap associated with the group. Thus, in order to treat all cosets on equal footing, one needs to depart the category of groups for the category of heaps. In the present paper we show that given a ring $R$, a left module $M$ and a submodule $N$ of $M$, the cosets of $N$ are (induced) modules of the truss $\tT(R)$ associated to $R$. In particular, the points of the quotient ring $R/I$, i.e.\ the cosets of the ideal $I$ in $R$ are induced sub-bimodules  or {\em paragons} of $\tT(R)$. 
Secondly, we show that trusses are deeply embedded in the group and ring theory; ring extensions by one-sided modules, including extensions of the endomorphism ring of an abelian group by this group, typically yield trusses rather than rings. More precisely, given a ring $R$ and a left $R$-module $M$, we define an associative multiplication on $R\oplus M$, which, while it does not distribute over the addition, distributes over the associated ternary heap operation;  thus $R\oplus M$ is a truss. 
We present this construction more generally for trusses thus making it applicable to braces too. 

The paper is organised as follows. Section~\ref{sec.prem} contains preliminary information on heaps, trusses their paragons and modules, including special cases of trusses associated to rings and braces. Section~\ref{sec.eqcl} is devoted to the analysis of the algebraic structure of a congruence equivalence classes in rings (and trusses). In the first main theorem, Theorem~\ref{thm.cong} we show that there is one-to-one correspondence between congruence equivalence classes in modules over trusses with induced submodules. In particular every such class in a module over a ring is a module over the truss associated to this ring, not a module over the ring (unless the class contains zero). We demonstrate how to translate these classes to modules over rings. Furthermore, we conclude that elements of a quotient ring $R/I$ are paragons in the  truss $\tT(R)$ associated to $R$. We then study rings for which the set of all units forms a paragon, in particular we find that such rings with the quotient by the paragon of units being $\ZZ_2$ are fully characterised by the property that either an element is a unit or else the difference between this element and the identity is a unit; see Theorem~\ref{thm.z2}. The section is concluded with an observation that if the set of units in a truss $T$ is a sub-heap, then it forms a brace. We apply this observation to commutative brace-type truss structures on the additive group $\ZZ$ classified in \cite{Brz:par}. By taking suitable quotients of one of these trusses we recover two-sided braces with additive structure of $C_{2^{k+1}}$ and multiplicative structure of $C_{2}\oplus C_{2^k}$, i.e.\ all abelian cyclic braces with socles of order 2 \cite[Proposition~4]{Rum:cla}.

In Section~\ref{sec.ext} we discuss and analyse extensions of trusses $T$ to trusses $T[M;e]$ by one-sided modules $M$ and relative to $e\in M$. These are constructed on the product heap $T\times M$. It is shown that the ring-type truss is only trivially extended to a ring type truss, i.e.\ $T[M;e]$ is ring-type if and only if $T$ is ring-type and $M=\{e\}$. Nevertheless even if $M$ is a non-trivial module, the resulting truss can be extended to a ring, following the procedure of adjoining the zero element described in \cite{BrzRyb:mod}. The main result of this section,  Theorem~\ref{thm.mod} gathers  properties of such extensions. In particular it is observed that although the extension is defined relatively to an element of a module, the trusses obtained for different elements are mutually isomorphic. The module $M$ has a natural action of $T[M;e]$ and  can be embedded into $T[M;e]$ as a paragon so that the quotient truss  is isomorphic to $T$. In a similar way $T$ can be embedded in $T[M;e]$ as a left paragon; the resulting quotient $T[M;e]$-module  returns $M$. Put together these mean that we are dealing with a split extension of trusses. Finally $T[M;e]$ is unital if and only if $T$ is unital, and units of $T[M;e]$ are formed of the product of units of $T$ and the whole of $M$. Thus, in particular, if $T$ corresponds to a brace, so does $T[M;e]$. We use this last observation to construct an extension of an abelian brace with additive structure of $C_4$ and multiplicative structure of $C_2\oplus C_2$ to the non-abelian brace with additive structure $C_4\oplus C_4$ and multiplicative structure $C_2\oplus D_8$. We note that this construction can be also performed for left (one-sided) trusses. In the special case in which $B$ is a left brace acting on an abelian group $M$ by automorphism of $M$, then the extension of the corresponding left truss $\tT(B)$ by the $M$ at 0, i.e.\ the left-truss $\tT(B)[\tT(M);0]$ corresponds to the special case of extensions of $B$ introduced in \cite[Theorem~3.3]{Bac:ext}.

\section{Preliminaries}\label{sec.prem}
\subsection{Trusses}\label{sec.truss} 
Following \cite{Brz:tru} a \textbf{truss} is a set $T$ together with a ternary operation $[-,-,-]:T\times T\times T\to T$ and multiplication $\cdot:T\times T\to T$, denoted by juxtaposition, such that for all $a,b,c,d,e\in T$,
\begin{zlist}
\item $[[a,b,c],d,e]=[a,b,[c,d,e]]$ (associativity),
\item $[b,b,a]=a=[a,b,b]$ (Mal'cev identities),
\item $[a,b,c]=[c,b,a]$ (commutativity),
\item $a\cdot[b,c,d]=[a\cdot b,a\cdot c,a\cdot d]$ (left distributivity),
\item$ [b,c,d]\cdot a=[b\cdot a,c\cdot a,d \cdot a]$ (right distributivity).
\end{zlist}
We say that $T$ is \textbf{commutative} if, for all $a,b\in T$, $ab=ba.$  A truss is said to be \textbf{unital} if it has the identity (often denoted by $1$) with respect to the multiplication. If only conditions (1)--(4) are satisfied one refers to $T$ as a \textbf{left truss} and in case all but (4) hold $T$ is termed a \textbf{right truss}.\footnote{Further possibilities include dropping condition (3) thus yielding skew trusses, but we will not be dealing with these notions in here.}

The conditions (1)--(2) mean that $(T,[-,-,-])$ is a heap and (1)--(3) that it is an abelian heap (see e.g.\ \cite{Pru:the}, \cite{Bae:ein}, \cite{Sus:the}). A \textbf{sub-heap} is a subset of a heap closed under the heap operation. Every (abelian) group has the unique associated (abelian) heap structure with the ternary operation $[a,b,c]=ab^{-1}c$. Conversely, fixing the middle term in the heap operation,  one obtains a group operation $[-,e,-]$, for which $e$ is the neutral element. This group is known as a \textbf{retract} of the heap (or $e$-retract, if the fixed neutral element is to be specified). All retracts of the same heap are mutually isomorphic with the isomorphism from the $e$-retract to the $e'$-retract given by
$$
\tau_e^{e'}(a) = [a,e,e'], \qquad \mbox{for all $a\in T$}.
$$

Every ring $R$ can be viewed as a truss, by replacing addition by the induced heap operation $[a,b,c]=a-b+c$, for all $a,b,c\in R$. This truss is denoted by $\tT(R)$  and it is an example of a \textbf{ring-type truss}, that is a truss $T$ with an \textbf{absorber}, i.e. an element $0\in T$ such that, for all $t\in T$, $0t=t0=0$. A truss can possess at most one absorber. 

A \textbf{truss morphism} is a function between trusses $\varphi:T\to T'$ which preservers operations, i.e., for all $a,b,c\in T$,

$$
\varphi([a,b,c])=[\varphi(a),\varphi(b),\varphi(c)]\quad \& \quad \varphi(ab)=\varphi(a)\varphi(b).
$$
In the case of rings, every morphism between two rings $\varphi:R\to R'$ is also a truss morphism between associated trusses; $\tT(\varphi):\tT(R)\to \tT(R')$ is given by $\tT(\varphi)(r)=\varphi(r)$, for all $r\in R$.  As presented notation suggests $\tT$ is a functor between the category of rings and that of trusses.

In rings one needs the notion of an ideal to create quotients, in trusses  the required structure is that of a paragon. A \textbf{paragon} is a sub-heap $P$ of a truss $T$ for which there exists $q\in P$, such that, for all $x\in T$ and $p\in P$,
$$\lambda^{q}(x,p):=[xp,xq,q]\in P\quad \& \quad \varrho^{q}(p,x):=[px,qx,q]\in P.$$

If $P$ is a paragon in a truss $T$, then the canonical map $\pi:T\to T/P$ is an epimorphism of trusses (see \cite[ Proposition~3.22]{Brz:par}). Here $T/P$ is the set of equivalence classes of the sub-heap relation:
$$
x\sim_Py \iff \exists p\in P,\;\; [x,y,p]\in P.
$$
The inverse image of any element $e$ in the codomain of a truss homomorphism $\varphi: T \lra T'$ is a paragon in $T$ (called an $e$-kernel of $\varphi$). Furthermore, $\varphi^{-1}(e)$ is a sub-truss of $T$ if and only if $e$ is an idempotent. 
A sub-heap $P$ is a left paragon if it is closed under $\lambda^q$ and a right paragon if it is closed under $\varrho^q$, for some (and hence for all) elements $q\in P$. Although paragons describe fully congruences in trusses, i.e.\ the sub-heap relation $\sim_P$ is a congruence in a truss $T$ if and only if $P$ is a paragon, one can still consider analogs of ideals in trusses, see \cite{Brz:par}. An \textbf{ideal} of a truss $T$ is a sub-heap $J\subseteq T$ such that for all $t\in T$ and $x\in J,$ $tx\in J$ and $xt\in J.$ Obviously every ideal is a paragon. Furthermore, $T/J$ is ring-type with the absorber $J$.

\subsection{Modules over trusses}\label{sec.mod} Let $T$ be a truss. A \textbf{left} $T$\textbf{-module} is an abelian heap $M$ together with an action $\lambda_M:T\times M\to M$ of $T$ on $M,$ written on elements as $t\cdot m,$ that is associative and distributes over the heap operation, i.e., for all $t,t',t''\in T$ and $m,m',m''\in M,$
$$
\begin{aligned}
t\cdot( t'\cdot m)& =(t t')\cdot m, \\
[t,t',t'']\cdot m &=[t\cdot m,t'\cdot m,t''\cdot m],\\
t\cdot[m,m',m''] &=[t\cdot m,t\cdot m',t\cdot m''].
\end{aligned}
$$
We say that a module $M$ is \textbf{unital} if $T$ has an identity and $1\cdot m=m$ for all $m\in M.$ In a symmetric way right modules are defined as left modules over the opposite truss, hence whatever is said about left modules can be stated about right modules too. Thus 
we will mainly consider left modules and further we will use term `module' for `left module'.
A \textbf{module morphism} is a morphism of heaps that preserves the actions.

An element $e$ of a left $T$-module $M$ is called an \textbf{absorber} if, for all $t\in T$, $t\cdot e=e.$ 
Since morphisms of modules preserve actions they also preserve absorbers. It is worth mentioning here that a module need not have absorbers and that, contrary to the case of trusses,
 even in a bimodule, 
a (two-sided) absorber is not unique; for example if $M$ is an abelian heap then $M$ can be treated as a bimodule over an arbitrary truss $T$ with operations given, for all $t\in T$ and for all $m\in M$, by $t\cdot m=m=m\cdot t$.
A module $M$ over a ring $R$ gives rise to the module $\tT(M)$ over the truss $\tT(R)$ with unchanged action and the heap operation given by addition.  
Every $R$-module homomorphism $\varphi$ gives rise to the $\tT(R)$-module homomorphism, $\tT(\varphi)=\varphi$. Therefore, $\tT$ is a functor from the category of modules over a ring $R$ to the category of modules over the associated truss $\tT(R)$.
It has been observed in \cite{BrzRyb:mod} that if $R$ is a ring, $M$ and $N$ are $\tT(R)$-modules with unique absorbers, say, $e$  in $M$ and  $e'$ in  $N$, and $\varphi: M\to N$ is a $\tT(R)$-module morphism, then since $\varphi$ preserves absorbers,  $\varphi$ is a morphism between $R$-modules $(M,[-,e,-])$ and $(N,[-,e'-])$.

Similarly to the case of modules over rings one can define a \textbf{submodule} of a module over a truss as a sub-heap closed under the action. Although it is a well defined subobject and it is possible to define quotient modules in this way, the resulting quotient module will have at least one absorber, so a more general  approach needs to be taken. Following \cite[Section 4.9]{Brz:par}, first observe that any element $e$ of a $T$-module $M$ induces an action, for all $t\in T$ and $m\in M$, 
\begin{equation}\label{act.ind}
t\cdot_e m = [t\cdot m, t\cdot e,e].
\end{equation}
The heap $M$ together with the induced action $\cdot_e$ is a $T$-module with absorber $e$. 
Next, an \textbf{induced submodule} $N$ of a module $M$ is a sub-heap closed under the induced action, i.e.\ for all $e,e'\in N$ and all $t\in T$ $t\cdot_{e}e':=[te',te,e]\in N.$ 
If $N$ is an induced submodule of $M$, then the canonical map $\pi_{N}: M\to M/N$ is an epimorphism; conversely, if $N$ is a sub-heap of $M$ such that $M/N$ is a $T$-module and the canonical surjection is a module epimorphism, then $N$ is an induced  submodule of $M$ (see \cite[Proposition 4.32(2)]{Brz:par}).

If  $\varphi:M\to M'$ is a module morphism, then, for all $m'\in M'$, $\varphi^{-1}(m')$ is an induced submodule of $M$. 

\subsection{Braces}\label{sec.braces}
 Following \cite{Rum:bra}, \cite{CedJes:bra}, \cite{GuaVen:ske} a \textbf{two-sided brace} is a set $B$ with two group operations: $+:B\times B\to B,$ which is commutative, and $\cdot:B\times B\to B,\ (a,b)\mapsto ab$ such that for all $a,b,c \in B,$ 
\begin{equation}\label{dist.brace}
a (b+c)=ab-a+a c\quad \text{\ \&\ }\quad (b+c) a=ba-a+c a.
\end{equation}
A set $B$ with two group operations satisfying the first of conditions \eqref{dist.brace} is known as a \textbf{left brace}; if only the second of \eqref{dist.brace} is satisfied one refers to $B$ as to a \textbf{right brace}.  It is possible to define a brace starting with a truss. Let $T$ be a truss that is a group with respect to the multiplication.  Then  $B=(T,+,\cdot),$ where $-+-:=[-,1,-]$ and $\cdot$ is the multiplication in $T$, is a two-sided brace (see \cite[Corollary~3.10 ]{Brz:par}). Note that $1$ is a neutral element for both $\cdot$ and $+$. In the converse direction, to any two-sided  brace $B$ a truss can be associated by equipping $B$ with the ternary heap operation $[a,b,c]:=a-b+c$,  for all $a,b,c\in B$. This truss will be denoted by $\tT(B).$ In case $B$ is a left brace, $\tT(B)$ is a left truss and if $B$ is a right brace, then $\tT(B)$ is a right truss. 
It might be  worth mentioning that $\tT(B)$ is the main example that motivated the definition of a truss. 

 Following \cite[Definition~3]{CedJes:bra} and the discussion in there an \textbf{ideal} of a  (left, right, two-sided) brace $B$ is a normal subgroup $S$ of $(B,\cdot)$ such that for all $b\in B$ and $s\in S,$
$$
bs-s\in S \quad \mbox{equivalently} \quad sb-s \in S .
$$ 
The foregoing condition can be written in the associated  truss $\tT(B)$ as the closeness under $\lambda^{1}$  or,  equivalently, under $\varrho^1$, since, for all $b\in \tT(B)$ and $s\in S,$
$$
\begin{aligned}
\lambda^{1}(b,s)&=[bs,s,1]=bs-s+1=bs-s\in S,\\
\varrho^1(s,b)&= [sb,s,1] = sb-s+1 = sb-s \in S
\end{aligned}
$$
If $S$ is an ideal, then the canonical map $\pi:B\to B/S$ is an epimorphism of left braces.

  \section{On the algebraic structure of an equivalence class}\label{sec.eqcl}

 The aim of this section is to grasp the internal algebraic structure of points in a module over a ring or, more generally, a module over a truss. Every module (over a ring or, more generally, truss) is a quotient module, for example of a free module. Seen in that way, points in any module have internal  structure of an equivalence class of a particular relation (congruence). If an $R$-module $M$ is a quotient of an $R$-module $N$, then the zero of  $M$ is a submodule of $N$. What is the structure of all the remaining points? As equivalence classes of a congruence they are sets which form a partition of $N$ compatible with the $R$-action. Therefore one might expect that they will carry additional algebraic structure which reflects addition and $R$-action in $N$. Inspired by Certaine's observation \cite{Cer:ter} that a subset $S$ of a group $G$ is a coset for some subgroup $G'$ of $G$ if and only if $S$ is a sub-heap of the unique heap associated to $G$, we aim in this section to reveal this structure. It turns out that one needs to view rings as trusses to achieve this aim. 
 
Let $T$ be a truss and let $M$ be a $T$-module. Given a congruence 
$\sim$  in  $M$, let  $\pi_{\sim}:M\to M/\sim$ denote the canonical epimorphism that sends elements of $m$ to their equivalence classes. Note that $a\sim b$ if and only if $\pi_{\sim}(a)=\pi_{\sim}(b)$. Therefore $\sim$ is the  kernel relation $\Ker(\pi_{\sim})$. Since every kernel relation is a congruence (for any algebra), one can identify congruences in $M$ with kernel relations of morphisms of $T$-modules. 

\begin{theorem}\label{thm.cong}
Let $T$ be a truss, $M$ a $T$-module and $N$ a subset of $M$. Then the following statements are equivalent:
\begin{zlist}
\item $N$ is an induced submodule of $M$.
\item $N$ is a sub-heap of $M$ and  there is a congruence $\sim$ on $M$ such that $\pi_N = \pi_\sim$.
\item $N$ is an equivalence class of a congruence on $M$.
\end{zlist}
\end{theorem}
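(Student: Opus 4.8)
The plan is to prove the cycle of implications $(1)\Rightarrow(2)\Rightarrow(3)\Rightarrow(1)$, drawing on the two structural facts about induced submodules recalled above—namely \cite[Proposition~4.32(2)]{Brz:par} and the fact that $\varphi^{-1}(m')$ is an induced submodule for any module morphism $\varphi$—together with the identification of congruences on $M$ with kernel relations of $T$-module morphisms. Since an equivalence class is always nonempty and the sub-heap relation $\sim_N$ fails to be reflexive when $N=\emptyset$, I would assume throughout that $N\neq\emptyset$ and fix a base point $n\in N$.

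For $(1)\Rightarrow(2)$, if $N$ is an induced submodule then it is a sub-heap by definition, and \cite[Proposition~4.32(2)]{Brz:par} makes $\pi_N\colon M\to M/N$ a module epimorphism. Its kernel relation $\Ker(\pi_N)$ is, by the very construction of $M/N$, the sub-heap relation $\sim_N$; being the kernel relation of a module morphism it is a congruence, and taking $\sim:=\sim_N$ gives $\pi_N=\pi_\sim$ on the nose. For $(2)\Rightarrow(3)$ the essential point is that a nonempty sub-heap is exactly one class of its own relation $\sim_N$: for $x,y\in N$ one has $[x,y,y]=x\in N$, so $x\sim_N y$, while if $x\sim_N n$ then $[x,n,p]\in N$ for some $p\in N$ and the Mal'cev identities give $x=[[x,n,p],p,n]\in N$. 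Thus the $\sim_N$-class of $n$ equals $N$; and since the hypothesis $\pi_N=\pi_\sim$ forces $\sim$ and $\sim_N$ to have identical fibres, $N$ is an equivalence class of the congruence $\sim$.

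For $(3)\Rightarrow(1)$, identify the congruence $\sim$ of which $N$ is a class with the kernel relation of the module epimorphism $\pi_\sim\colon M\to M/\sim$. Writing $c=\pi_\sim(n)$, the class $N$ is precisely the fibre $\pi_\sim^{-1}(c)$, which is an induced submodule by the cited preimage result; this closes the cycle.

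I expect the only delicate bookkeeping to be in $(2)\Rightarrow(3)$: verifying through the heap axioms both that $N$ is a \emph{full} $\sim_N$-class and that the equality $\pi_N=\pi_\sim$ legitimately identifies $\sim$ with $\sim_N$. The other two implications are near-immediate once the quoted results and the congruence/kernel dictionary are in place; indeed $(1)\Leftrightarrow(2)$ is little more than a restatement of \cite[Proposition~4.32(2)]{Brz:par}.
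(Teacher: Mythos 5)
Your proof is correct and follows the same cycle $(1)\Rightarrow(2)\Rightarrow(3)\Rightarrow(1)$ as the paper, but two of the three implications are argued by different means. For $(2)\Rightarrow(3)$ the paper simply cites \cite[Proposition~2.10]{Brz:par} for the fact that a sub-heap is a single equivalence class of its own sub-heap relation, whereas you prove it directly from the Mal'cev identities ($[x,y,y]=x\in N$ gives $N$ inside one class; $x=[[x,n,p],p,n]\in N$ gives the reverse inclusion); this makes your argument self-contained at the cost of re-deriving a quoted lemma. The more substantive divergence is in $(3)\Rightarrow(1)$: the paper first invokes \cite[Theorem~1]{Cer:ter} to see that $N$ is a sub-heap and then verifies the induced-submodule condition by hand, setting $m=[t\cdot n,t\cdot n',n']$ and computing $\pi_\sim(m)=\pi_\sim(n')$, while you instead identify the congruence $\sim$ with the kernel relation of the $T$-module morphism $\pi_\sim\colon M\to M/\sim$ and quote the preliminary fact that every fibre $\varphi^{-1}(m')$ of a module morphism is an induced submodule. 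Both routes are legitimate: yours is shorter and conceptually cleaner, but it delegates the actual work to the preimage lemma, which is essentially what the paper's explicit computation reproves in this special case; the paper's version has the merit of displaying the computation that makes the closure under the induced action visible. A small point in your favour: you explicitly dispose of the case $N=\emptyset$ (where $\sim_N$ is not reflexive, so neither (2) nor (3) can hold), an edge case the paper passes over in silence.
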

\begin{proof}
(1) $\implies$ (2): If $N$ is an induced submodule of $M$, then the sub-heap relation $\sim_N$ is a congruence  on $M$ by (the proof of) \cite[Proposition~4.32]{Brz:par}.

(2) $\implies$ (3): Every induced submodule is a sub-heap and every sub-heap is an equivalence class for its sub-heap relation, see \cite[Proposition~2.10]{Brz:par}.

(3) $\implies$ (1):  Assume that $N$ is an equivalence class for a congruence on $M$, say $\sim$. In particular $\sim$ is a congruence for the  heap structure on $M$, hence $N$ is a sub-heap by \cite[Theorem~1]{Cer:ter}. Furthermore, for all $t\in T$ and $n,n'\in N$, $t\cdot n \sim t\cdot n'$, i.e.\ $\pi_\sim(t\cdot n) = \pi_\sim( t\cdot n')$. Set $m = [t\cdot n, t\cdot n', n']$. Then
$$
\begin{aligned}
\pi_\sim(m) &= \pi_\sim\left([t\cdot n, t\cdot n', n']\right) \\
&= [\pi_\sim(t\cdot n), \pi_\sim( t\cdot n'),\pi_\sim(n')]
 = [\pi_\sim(t\cdot n), \pi_\sim( t\cdot n),\pi_\sim(n')].
\end{aligned}
$$
Hence, $\pi_\sim(m)=\pi_\sim(n')$, i.e.\ $m\in N$.  Thus $N$ is an induced submodule.
\end{proof}

Theorem~\ref{thm.cong} can be applied to trusses associates to rings, thus yielding
\begin{corollary}\label{cor.cong}
If $M$ is a module over a ring $R,$ then $N\subseteq M$ is an equivalence class for a congruence $\sim$ in $M$ if and only if $N$ is an induced submodule of $\tT(M).$
\end{corollary}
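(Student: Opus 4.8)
The plan is to derive Corollary~\ref{cor.cong} as a direct specialization of Theorem~\ref{thm.cong}, using the functor $\tT$ to pass from the ring setting to the truss setting. The key observation is that the congruences appearing in the statement are of two different kinds that must be reconciled: on the left-hand side $\sim$ is a congruence of the $R$-module $M$, while the notion of induced submodule on the right-hand side refers to the $\tT(R)$-module $\tT(M)$. So the first step is to verify that these two notions of congruence coincide.

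First I would recall that, by the discussion in Section~\ref{sec.mod}, the $R$-module $M$ gives rise to the $\tT(R)$-module $\tT(M)$ with the same underlying set, the same action, and heap operation $[a,b,c]=a-b+c$. The crucial point is that a subset $N\subseteq M$ together with a relation is an $R$-module congruence on $M$ if and only if it is a $\tT(R)$-module congruence on $\tT(M)$. Indeed, a congruence on an algebra is an equivalence relation compatible with all the operations; since the heap operation $[a,b,c]=a-b+c$ is built from the $R$-module addition (and its inverse), compatibility with $+$ is equivalent to compatibility with $[-,-,-]$, while compatibility with the $R$-action is literally the same condition as compatibility with the $\tT(R)$-action because the two actions agree. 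Thus the congruences of $M$ as an $R$-module are exactly the congruences of $\tT(M)$ as a $\tT(R)$-module.

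Having established this identification, the corollary follows immediately from the equivalence of statements (1) and (3) in Theorem~\ref{thm.cong} applied to the truss $T=\tT(R)$ and the $T$-module $\tT(M)$: a subset $N$ is an equivalence class for a congruence of $\tT(M)$ if and only if $N$ is an induced submodule of $\tT(M)$, and by the preceding paragraph ``congruence of $\tT(M)$'' may be read as ``congruence of the $R$-module $M$''. This is precisely the asserted biconditional.

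I expect the only genuine content — and hence the main obstacle, though a mild one — to be the verification that the two notions of congruence coincide, i.e.\ that no information is lost or added when passing from the module structure over $R$ to the module structure over $\tT(R)$. Once this is in place, the corollary is a formal consequence of Theorem~\ref{thm.cong}, so no further calculation is needed. One could phrase the whole argument in a single sentence by invoking functoriality of $\tT$ and Theorem~\ref{thm.cong}, but I would spell out the congruence identification explicitly to make the reduction transparent.
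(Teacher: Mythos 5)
Your proposal is correct and follows essentially the same route as the paper: the paper's proof likewise consists of observing that an equivalence relation is a congruence on $M$ as an $R$-module if and only if it is a congruence on $\tT(M)$ as a $\tT(R)$-module, and then invoking Theorem~\ref{thm.cong}. Your more detailed justification of that identification (compatibility with $+$ versus compatibility with $[-,-,-]$, and the actions literally coinciding) is exactly the content the paper leaves implicit.
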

\begin{proof}
Suffices it to observe that an equivalence relation  is a congruence on $M$ as an $R$-module if and only if it is a congruence on $M$ as a $T(R)$-module and then apply Theorem~\ref{thm.cong}.
\end{proof}

Since every congruence relation of $R$-modules arises as the quotient by a submodule, Corollary~\ref{cor.cong} gives interpretation of points of quotients of $R$-modules $M/N$ as induced submodules of $\tT(M)$ ($M$ viewed as a module of the associated truss $\tT(R)$). Furthermore, it provides one with the procedure of calculating the quotient of an $R$-module $M$ by the equivalence class of any point $m$ of $M$: one simply needs to interpret $M$ as a heap and then take the quotient by the class of $m$ which is a sub-heap of $M$. In the same vein one obtains the following interpretation of points of a quotient and hence of any ring.

\begin{corollary}\label{cor.cong.ring}
Let $R$ be a ring and $I$ an ideal in $R$. Then every point of the quotient ring $R/I$ is a paragon in the associated truss $\tT(R)$.
\end{corollary}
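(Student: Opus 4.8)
The plan is to identify the points of $R/I$ with equivalence classes of a congruence and then to invoke Corollary~\ref{cor.cong} on each side. A point of $R/I$ is a coset $r+I$ with $r\in R$, and the partition of $R$ into these cosets coincides with the partition into equivalence classes of the relation $\sim$ defined by $a\sim b\iff a-b\in I$. Hence it suffices to prove that every such class is a paragon in $\tT(R)$. Since each class is an equivalence class of a heap congruence, it is in particular a sub-heap (e.g.\ by \cite[Proposition~2.10]{Brz:par}), so only the closure conditions defining a paragon remain to be checked.

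First I would record that $\sim$ is simultaneously a congruence of $R$ regarded as a left $\tT(R)$-module and as a right $\tT(R)$-module, in each case with the action given by ring multiplication. Compatibility with the heap operation is immediate, while compatibility with the left action, $a-b\in I\Rightarrow xa-xb=x(a-b)\in I$, uses that $I$ is a left ideal, and compatibility with the right action uses symmetrically that $I$ is a right ideal; this is the only place where two-sidedness of $I$ enters. Applying Corollary~\ref{cor.cong} to the left-module structure then shows that each coset $P=r+I$ is an induced submodule of $\tT(R)$ viewed as a left module over itself. Comparing the defining closure conditions, namely $t\cdot_{q}p=[tp,tq,q]$ for an induced submodule and $\lambda^{q}(x,p)=[xp,xq,q]$ for a left paragon, one sees that these coincide, so $P$ is a left paragon; applying Corollary~\ref{cor.cong} to the right-module structure shows likewise that $P$ is a right paragon, closed under $\varrho^{q}$.

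Finally, since closure under $\lambda^{q}$ and under $\varrho^{q}$ each holds for \emph{every} choice of base point $q\in P$ (as noted in Section~\ref{sec.truss}), a single $q\in P$ witnesses both conditions at once, whence $P$ is a paragon. The only step I would treat as the main obstacle is this identification of an induced one-sided submodule of $\tT(R)$ with a one-sided paragon together with the observation that the same distinguished element serves on both sides; everything else reduces to the two compatibility checks above. As a cross-check one may instead verify the paragon conditions by hand, taking $q=r$ and computing $\lambda^{r}(x,r+i)=xi+r$ and $\varrho^{r}(r+i,x)=ix+r$, both of which lie in $r+I$ precisely because $I$ is a two-sided ideal.
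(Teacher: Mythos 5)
Your proof is correct and follows essentially the route the paper intends: the paper derives this corollary ``in the same vein'' as Corollary~\ref{cor.cong}, i.e.\ by viewing $R$ as a module over itself on each side and identifying cosets of $I$ with equivalence classes of a congruence, exactly as you do. Your explicit treatment of the left/right module structures, the matching of the induced-action conditions with $\lambda^q$ and $\varrho^q$, and the direct computation $\lambda^{r}(x,r+i)=xi+r$, $\varrho^{r}(r+i,x)=ix+r$ merely spell out details the paper leaves implicit.
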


 The following proposition explains how to recover the submodule from any equivalence class in the quotient $R$-module $M/N$. 
  \begin{proposition}\label{prop.para.shift}
Let $M$ be a module over a truss $T$ and let $N$ be a left (resp.\ right) induced submodule of $M$. For all $e\in N$ and $m\in M$, define the sub-heap
\begin{equation}\label{para.shift}
N_{e}^{m} = \tau_{e}^m(N) = \{[n,e,m]\; |\; n\in N\}.
\end{equation}
Then:
\begin{zlist}
\item $N_{e}^{m}$ is a left (resp.\ right) induced submodule.
\item $N_{e}^{m}$ is a left (resp.\ right) submodule if and only if, for all $t\in T$, 
$$
t\cdot m\in N_{e}^{m}, \qquad \mbox{(resp.\ $m\cdot t\in N_{e}^{m}$)}.
$$
\item If $m\not\in N$, then $N\cap N_{e}^{m} = \emptyset$.
\end{zlist}
\end{proposition}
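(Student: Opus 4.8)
The plan is to treat all three parts through the single observation that the translation $\tau_e^m\colon a\mapsto [a,e,m]$ is a heap isomorphism of $M$ (it is the isomorphism between the $e$- and $m$-retracts recalled in Section~\ref{sec.prem}, hence in particular a heap automorphism), so that $N_e^m=\tau_e^m(N)$ is the image of a sub-heap under a heap automorphism and is therefore itself a sub-heap. This disposes of the sub-heap claim at once and fixes the shape of a generic element of $N_e^m$ as $f=[n,e,m]$ with $n\in N$.

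For part (1) I would take $f=[n,e,m]$ and $f'=[n',e,m]$ in $N_e^m$ and $t\in T$, and compute the induced action $t\cdot_f f'=[t\cdot f', t\cdot f, f]$. Left distributivity gives $t\cdot f=[t\cdot n, t\cdot e, t\cdot m]$ and likewise for $f'$, and then a direct computation in the abelian heap, cancelling the repeated entries $t\cdot e$ and $t\cdot m$, collapses $[t\cdot f', t\cdot f, f]$ to $\big[[t\cdot n', t\cdot n, n], e, m\big]=\tau_e^m\big([t\cdot n', t\cdot n, n]\big)$. Since $[t\cdot n', t\cdot n, n]=t\cdot_n n'$ lies in $N$ because $N$ is an induced submodule, the result lies in $N_e^m$; this is the computational heart of the proposition. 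The right-handed statement is identical after replacing the left induced action by the right one and invoking right distributivity.

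For part (2), one implication is immediate: $m=[e,e,m]\in N_e^m$, so if $N_e^m$ is a submodule then $t\cdot m\in N_e^m$ for every $t$. For the converse, assume $t\cdot m\in N_e^m$ for all $t$ and take $f\in N_e^m$. Using that $N_e^m$ is an induced submodule (part (1)) together with the base point $m\in N_e^m$, closure under $\cdot_m$ gives $[t\cdot f, t\cdot m, m]\in N_e^m$; then the heap identity $\big[[t\cdot f, t\cdot m, m], m, t\cdot m\big]=t\cdot f$ (a consequence of associativity and the Mal'cev identities) exhibits $t\cdot f$ as a heap combination of the three elements $[t\cdot f, t\cdot m, m]$, $m$ and $t\cdot m$ of $N_e^m$, whence $t\cdot f\in N_e^m$. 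Finally, for part (3), if some $x$ lay in $N\cap N_e^m$ then $x=[n,e,m]$ for some $n\in N$, and solving this relation yields $m=[e,n,x]$; as $e,n,x\in N$ and $N$ is a sub-heap, this forces $m\in N$, contrary to hypothesis. The only genuinely laborious step is the heap algebra underlying part (1); parts (2) and (3) are short once the relevant heap identities are in hand.
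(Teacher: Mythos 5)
Your proof is correct, and in parts (1) and (3) it follows essentially the paper's own route: the computational heart of (1) is the same collapse, via distributivity and the abelian-heap axioms, of the induced action applied to elements $[n,e,m]$ into $\tau_e^m$ of an element of the induced action on $N$, and your resolution of (3), $m=[e,n,x]\in N$, is exactly the paper's contradiction argument (indeed your formula is the correct one; the paper's $m=[n',e,n]$ has $e$ and $n'$ transposed). Two small divergences are worth noting. First, in (1) you verify closure of $N_e^m$ under $t\cdot_f f'$ for an \emph{arbitrary} base point $f\in N_e^m$, i.e.\ the definition in full, whereas the paper checks only the base point $m=\tau_e^m(e)$; both suffice, since for a sub-heap closure at one base point propagates to all. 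Second, and more substantively, your converse in (2) is a genuinely different argument: the paper picks a preimage $n_m\in N$ with $t\cdot m=[n_m,e,m]$ and redoes a direct computation showing $t\cdot[n,e,m]=\tau_e^m([[t\cdot n,t\cdot e,e],e,n_m])$, while you bootstrap from part (1) — closure under $\cdot_m$ gives $[t\cdot f,t\cdot m,m]\in N_e^m$, and the identity $[[t\cdot f,t\cdot m,m],m,t\cdot m]=t\cdot f$ (associativity plus Mal'cev) then expresses $t\cdot f$ as a heap word in elements of $N_e^m$. Your version is slicker and makes the logical dependence on (1) explicit; the paper's version is self-contained and does not rely on (1) having been proved first. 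Both are valid, and nothing in your argument needs repair.
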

\begin{proof}
(1) First let us note that, as a consequence of Mal'cev idenities, $m= \tau_e^m(e) \in N_{e}^{m}$. For all $t\in T$,
$$
\begin{aligned}
~[t\cdot[n,e,m],t\cdot m,m] &= [[t\cdot n,t\cdot e,t\cdot m],t\cdot m,m]\\
& = [t\cdot n,t\cdot e,m] = [[t\cdot n,t\cdot e,e],e,m] = \tau_e^m([t\cdot n,t\cdot e,e]),
\end{aligned}
$$
by the (left) distributive law, associativity and Mal'cev identities. Since $N$ is a left induced module, $[t\cdot n,t\cdot e,e]\in N$, and so 
$$
[t\cdot [n,e,m],t\cdot m,m] \in \tau_e^m (N) = N_e^m,
$$
as required. The case of a right induced module is dealt with symmetrically.

(2) Obviously, if $N_e^m$ is a submodule and since $m\in N_e^m$,  $t\cdot m\in N_e^m$. Conversely, if $t\cdot m\in N_e^m$, let $n_m\in N$ be such that
$$
t\cdot m = \tau_e^m(n_m) = [n_m, e,m].
$$
Then, for all $n\in N$,
$$
\begin{aligned}
t\cdot [n,e,m] & = [t\cdot n,t\cdot e,t\cdot m] = [t\cdot n, t\cdot e, [n_m, e,m]]\\
& = [[[t\cdot n,t\cdot e,e],e,n_m], e,m]
= \tau_e^m([[t\cdot n,t\cdot e,e],e,n_m]),
\end{aligned}
$$
by the (left) distributive law, associativity and the Mal'cev identities. Since $N$ is an induced submodule, $[t\cdot n,t\cdot e,e]\in N$. Consequently, $[[t\cdot n,t\cdot e,e],e,n_m]\in N$, and therefore $t\cdot \tau_e^m(n) \in \tau_e^m(N) = N_e^m$, for all $n\in N$ and $t\in T$, as required.

(3) Suppose that $n\in N$ is an element of $N_{e}^{m}$, so that there is $n'\in N$ such that $n=[n',e,m]$. By the associativity of $[-,-,-]$ and the Mal'cev identities, $m = [n',e,n]\in N$, which contradicts the assumption that $m\not\in N$.
\end{proof}

Since $N_{e}^m$ is the image of $N$ under the heap isomorphism $\tau_e^m$,   $N_{e}^m\cong N$ as  heaps.  By the foregoing proposition this isomorphism is an isomorphism of induced modules, therefore $M/N_{e}^{m}\cong M/N.$ In the case of a module $M$ over a ring $R$, an $R$-submodule is obtained from the point $N$ of the quotient module by choosing $m=0$, the zero of $M$.
  
  \begin{example}\label{ex.para.Z}
View the ring $\ZZ$ as a truss with the heap operation $[k,l,m]_+= k-l+m$ and the usual multiplication of integers. For any $n\in \NN$, consider the ideal $n\ZZ$. Then, for all $m\in \ZZ$,
$$
(n\ZZ)_0^m= \tau_0^m(n\ZZ) = \{kn+m\; |\; k\in \ZZ\} = \{kn +r\; |\; k\in \ZZ\},
$$
where $r$ is a remainder of the division of $m$ by $n$, is a paragon. In particular if $n$ does not divide $m$, or, equivalently, $0< m <n$ then $(n\ZZ)_0^m$ is not an ideal. One easily checks that
$$
\tT(\ZZ)/(n\ZZ)_0^m\cong\tT( \ZZ/n\ZZ).
$$
For example, $(2\ZZ)_0^1$ is the set of all odd integers but one can translate it to an ideal by taking $((2\ZZ)_0^1)_{1}^0=2\ZZ.$ In spite of the fact that $(2\ZZ)_0^1$ is not an ideal and that it contains the identity of $\ZZ$ it is a  paragon different from $\ZZ$, with corresponding quotient being a non-trivial ring. 
\end{example}

\begin{definition}
Let $T$ be a truss. A paragon $P\subseteq T$ is said to be \textbf{normal} if, for all $t\in T,$ $tP=Pt$.
\end{definition}

\begin{proposition}
Let $B$ be a two-sided brace. Then:
\begin{zlist}
\item $S\subseteq B$ is an ideal if and only if $S$ is a normal paragon in $\tT(B)$ and $1\in S.$ \label{prop:brace1}
\item Let $I$ be an ideal in $B,$ then $S\in B/I$ if and only if $S$ is a normal paragon in $\tT(B).$ \label{prop:brace2}
\end{zlist}
\end{proposition}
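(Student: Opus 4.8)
The plan is to read both equivalences through the maps $\lambda^{1}$ and $\varrho^{1}$, which bridge the additive and multiplicative group structures of the brace; at the distinguished element $q=1$ these are $\lambda^{1}(x,p)=[xp,x,1]=xp-x$ and $\varrho^{1}(p,x)=[px,x,1]=px-x$. Two observations organise everything. In $\tT(B)$ the element $1$ is at once the multiplicative identity and the neutral element of the additive retract, so a sub-heap containing $1$ is exactly an additive subgroup (closure under $[-,1,-]=+$ and $[1,-,1]=-$). Moreover, by the computation in Section~\ref{sec.braces}, the defining conditions of a brace ideal are precisely closure of $S$ under $\lambda^{1}(x,-)$ and $\varrho^{1}(-,x)$ for all $x\in B$, i.e.\ the paragon conditions at $q=1$. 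Thus in part~(1) the paragon axioms and the ideal conditions coincide literally, and the whole content is the passage between ``additive subgroup'' and ``multiplicative subgroup''.

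For the forward implication of part~(1), let $S$ be an ideal; then $1\in S$, and $S$ being a normal subgroup of $(B,\cdot)$ is exactly the normality $tS=St$ of the paragon. To see $S$ is a sub-heap I would realise the additive operations through the multiplicative ones: for $s,t\in S$ set $w=\lambda^{1}(s^{-1},t)=s^{-1}t-s^{-1}\in S$; left distributivity gives $s\cdot w=[s(s^{-1}t),ss^{-1},s]=[t,1,s]=t+s$, whence $s+t=sw\in S$, while $\lambda^{1}(s,s^{-1})=[1,s,1]=-s\in S$ supplies additive inverses. So $S$ is an additive subgroup, hence a sub-heap, and therefore a paragon. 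Conversely, if $S$ is a normal paragon with $1\in S$, then as a sub-heap through $1$ it is an additive subgroup and the paragon conditions at $q=1$ are the ideal conditions; the dual computation recovers the multiplicative structure, $st=s+(st-s)=s+\lambda^{1}(s,t)\in S$ and $\lambda^{1}(s^{-1},s)=-s^{-1}\in S$, so $S$ is a subgroup of $(B,\cdot)$ which $tS=St$ renders normal. Hence $S$ is an ideal.

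For part~(2), by part~(1) $I$ is a normal paragon with $1\in I$, and since $\lambda^{1}(b,-)$ permutes $I$ the multiplicative coset $bI=\{\,b+\lambda^{1}(b,i)\mid i\in I\,\}$ coincides with the heap translate $\tau_{1}^{b}(I)=b+I$. Hence every point of $B/I$ is the image of $I$ under the heap isomorphism $\tau_{1}^{b}$, in particular a sub-heap. Choosing $q=b\in S:=b+I$ and writing a general element of $S$ as $[b,1,i]$, left distributivity, multiplicative associativity and $\lambda^{1}(x,i)\in I$ give $[x[b,1,i],xb,b]=b+\lambda^{1}(x,i)\in S$, and symmetrically $[[b,1,i]x,bx,b]=b+\varrho^{1}(i,x)\in S$; so $S$ is a paragon. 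In the reverse direction I would translate a normal paragon $S$ by one of its points to $\tau_{s}^{1}(S)=S-s$, a sub-heap through $1$ that is again a paragon by Proposition~\ref{prop.para.shift} (with $T$ regarded as a bimodule over itself) and hence an ideal by part~(1), of which $S$ is a coset; this is the brace analogue of Corollary~\ref{cor.cong.ring}.

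The step I expect to be the genuine obstacle is the normality clause for a coset. Expanding, $tS=(tb)I$ while $St=(bt)I$, so $S=bI$ is a \emph{normal} paragon precisely when $tb\equiv bt\ \pmod I$ for all $t\in B$, equivalently $tb-bt\in I$. This holds automatically when the multiplicative group of $B$ is abelian --- in particular for the commutative braces to which the statement is applied --- and in general amounts to requiring that $I$ contain all the commutators $tb-bt$. Once this is in force the remaining paragon axioms fall out mechanically from the two distributive laws together with the $\lambda^{1}$- and $\varrho^{1}$-invariance of $I$ established in part~(1); the only real work is the additive/multiplicative translation carried out there.
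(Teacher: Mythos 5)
Your part~(1) is correct and is essentially the paper's own argument: both proofs hinge on passing between additive and multiplicative closure through $\lambda^{1}$. The paper obtains the subgroup property in a single stroke from $[a,\lambda^{1}(ab^{-1},b),1]=ab^{-1}$, while you verify closure under product and inverse separately, and (unlike the paper, which only cites the literature) you actually prove the sub-heap property of an ideal via $s+t=s\cdot\lambda^{1}(s^{-1},t)$. Note also that your reading $\lambda^{1}(x,p)=[xp,x,1]=xp-x$ is the correct instance of the definition $\lambda^{q}(x,p)=[xp,xq,q]$; the formula $[bs,s,1]=bs-s$ appearing in Section~\ref{sec.braces} is a typo, and your version is the one matching the ideal condition of Ced\'o--Jespers--Okni\'nski.

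For part~(2), the obstacle you flag at the end is not a defect of your write-up: it is a genuine gap in the paper's own proof, and you have diagnosed it exactly. The paper deduces that the translates $I_{1}^{a}$ and $S_{a}^{1}$ are \emph{normal} paragons from Proposition~\ref{prop.para.shift}(1), but that proposition only yields the paragon (induced submodule) property of translates and says nothing about normality. Your computation $tS=(tb)I$, $St=(bt)I$ is right, so the coset $S=bI=I+b$ is a normal paragon precisely when $\bar{b}$ is central in the multiplicative group of $B/I$; your paragon computation $[x[b,1,i],xb,b]=b+\lambda^{1}(x,i)$ for the coset is also correct, so normality is the only issue. And it genuinely fails: in any two-sided brace whose multiplicative group is non-abelian (for instance the adjoint brace of the nilpotent ring of strictly upper triangular $3\times 3$ matrices over $\ZZ_{2}$), take $I=\{1\}$; the cosets are singletons $\{a\}$, which are always paragons, yet $t\{a\}=\{a\}t$ holds only for central $a$. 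Hence the forward implication of \eqref{prop:brace2} is false as stated, and your proposed repair (require $tb-bt\in I$ for all $t$, automatic when the multiplicative group is abelian) is the right one. One further point, which affects your converse direction as well as the paper's: to apply part~(1) to $S_{a}^{1}=S-a$ one needs its normality, and this \emph{can} be rescued from the normality of $S$, though neither you nor the paper does it. Conjugation $x\mapsto txt^{-1}$ in a two-sided brace is an automorphism of the additive group (this uses both distributive laws), whence $t(S-a)t^{-1}=tSt^{-1}-tat^{-1}=S-tat^{-1}=S-a$, because $tSt^{-1}=S$, $tat^{-1}\in S$, and $S$ is a coset of the additive subgroup $S-a$. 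So ``normal paragon $\Rightarrow$ coset of an ideal'' is true once this is inserted, while the opposite implication requires the centrality hypothesis you identified.
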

\begin{proof}(1) First, if $S$ is an ideal, then $S$ is a normal paragon in $\tT(B)$ and $1\in S$ (see \cite[Definition~3]{CedJes:bra} or the discussion at the end of Section~\ref{sec.braces}). In the converse direction, assume that $S$ is a normal paragon in $\tT(B)$ and $1\in S.$ It is enough to show that $(S,\cdot)$ is a normal subgroup since $S$ is closed under $\lambda^{1}$ (again see \cite [Definition~3]{CedJes:bra} or the discussion at the end of Section~\ref{sec.braces}). Let $a,b\in S$, then
$$
S\ni [a,\lambda^{1}(ab^{-1},b),1]=[a,[ab^{-1}b,ab^{-1},1],1]=[a,[a,ab^{-1},1],1]=[a,a,ab^{-1}]=ab^{-1},
$$ 
by the commutativity of the heap operation, associative laws and Mal'cev identities. Therefore $S$ is a subgroup  of $B$.  Finally, the normality of the paragon implies the normality of the group.

(2) Let us assume that $S\in B/I$ for an ideal $I.$ Then $S=I+a=I-1+a=I_{1}^{a}$ for some $a\in B.$ As paragons are induced modules in trusses, by statement \eqref{prop:brace1}  and by Proposition~\ref{prop.para.shift}(1), $S$ is a normal paragon. Conversely, assume that $S$ is a normal paragon in $\tT(B)$. Then by Proposition~\ref{prop.para.shift}(1), for any $a\in S,$ $S_{a}^{1}$ 
is a normal paragon and $1\in S_{a}^{1}.$ Therefore, by assertion (1)  $S_{a}^{1}$ 
is an ideal in $B$ and $S=(S _{a}^{1})_{1}^{a}=S_{a}^{1}-1+a=S_{a}^{1}+a\in B/S_{a}^{1}.$
\end{proof}

\begin{example}
Let $B$ be a (left, right or two-sided) brace. Following \cite[Section~4]{CedJes:bra}, the  \textbf{socle}  $\mathrm{Soc}(B)$ of $B$ is defined as
$$
\begin{aligned}
\mathrm{Soc}(B)&=\{a\in B\;|\; ab=a+b,\ b\in B\}.
\end{aligned}
$$ 
The socle  is an ideal of a brace that is non-trivial, i.e.\ different from $\{1\}$ if $B$ is non-trivial and finite (see \cite[Proposition~3]{CedJes:bra} or \cite{Rum:bra}).  
For all $c\in B$,  
$$
c+\mathrm{Soc}(B)=\{c+a\;|\;a\in \mathrm{Soc}(B)\}
$$ 
is an equivalence class of a congruence in $B$ and  hence it is a paragon in $\tT(B)$. Indeed,   $c+\mathrm{Soc}(B)$ is a sub-heap of $\tT(B)$, since for all $a,a',a''\in \mathrm{Soc}(B)$,
$$
[c+a,c+a',c+a'']=c+a-c-a'+c+a''=c+(a-a'+a'')\in c+\mathrm{Soc}(B).
$$
Furthermore, for all $b\in B$,
$$\varrho^{c}(c+a,b)=[c,cb,(c+a)b]=c-cb+cb-b+ab=c+ab-b\in c+\mathrm{Soc}(B),
$$
and
$$
\lambda^{c}(b,c+a)=[b(c+a),bc,c]=bc-b+ba-bc+c=c+(bab^{-1})b-b\in c+\mathrm{Soc}(B),
$$
by  the fact that $\mathrm{Soc}(B)$ is a normal subgroup of $(B,\cdot)$.
Therefore, $c+\mathrm{Soc}(B)$ is closed under  right and left induced actions,  and hence it is a paragon in $\tT(B)$.

\end{example}

\begin{example} Let $RG$ be a group ring for an arbitrary ring $R$ and an arbitrary group $G.$ Let us observe that, for all $r\in R$, the  sets 
$$
A_{r}:=\{\sum_{g\in G}r_{g}g\;|\;\sum_{g\in G}r_{g}=r\}
$$
are paragons in $\tT(RG)$ as inverse images of $r$ under the ring (and hence truss) homomorphism 
$$
\pi: RG\lra R, \qquad  \sum_{g\in G}r_{g}g\lto  \sum_{g\in G}r_{g}.
$$
In particular, each 
 $A_{r}$ is a sub-truss only if $r$ is an idempotent. 
It can be easily checked (or deduced from the fact that $\pi$ is an epimorphism combined with the first isomorphism theorem for algebras) that, for all $r\in R,$ 
$$
\mathrm{T}(RG)/A_{r}\cong \mathrm{T}(R).
$$
\end{example}

In the remainder of this section we study when the group of units $\uU(R)$ of a ring $R$ is an equivalence class for a congruence in $R$ or, equivalently, when $\uU(R)$ forms a paragon in $\tT(R)$.

 \begin{lemma}\label{lem.g.ring}
Let $RG$ be a group ring. If $\uU(RG)$ is a paragon in $\tT(RG),$ then $\uU(R)$ is a paragon in $\tT(R).$
\end{lemma}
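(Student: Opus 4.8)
The plan is to relate $R$ and $RG$ through the two canonical ring maps and to use that the group of units $\uU(R)$ is precisely the augmentation-image of $\uU(RG)$. Let $\pi\colon RG\to R$ be the augmentation $\sum_{g\in G}r_g g\mapsto\sum_{g\in G}r_g$ and let $\iota\colon R\to RG$ be the unital embedding $r\mapsto r1_G$, where $1_G$ is the identity of $G$. Both are unital ring homomorphisms, hence truss morphisms $\pi\colon\tT(RG)\to\tT(R)$ and $\iota\colon\tT(R)\to\tT(RG)$, and they satisfy $\pi\circ\iota=\id_R$; in particular $\pi$ is surjective.

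Next I would identify $\uU(R)$ with $\pi(\uU(RG))$. Since $\pi$ is a unital ring homomorphism it sends units to units, giving $\pi(\uU(RG))\subseteq\uU(R)$; conversely, $\iota$ sends units to units, so for $v\in\uU(R)$ one has $\iota(v)\in\uU(RG)$ and $\pi(\iota(v))=v$, whence $\uU(R)\subseteq\pi(\uU(RG))$. Thus $\uU(R)=\pi(\uU(RG))$, the image of the paragon $\uU(RG)$ under the surjective truss morphism $\pi$.

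It then remains to observe that the image of a paragon under a surjective truss morphism is again a paragon, applied here to $\pi$. This is a short direct check: the image of a sub-heap under a heap morphism is a sub-heap, and if $q$ is a distinguished element of $\uU(RG)$ then $\pi(q)$ serves as one for $\pi(\uU(RG))$, because for $x'=\pi(x)\in\tT(R)$ and $p'=\pi(p)\in\pi(\uU(RG))$ one computes
$$
\lambda^{\pi(q)}(x',p')=[\pi(x)\pi(p),\pi(x)\pi(q),\pi(q)]=\pi\big([xp,xq,q]\big)=\pi\big(\lambda^{q}(x,p)\big)\in\pi(\uU(RG)),
$$
using that $\pi$ preserves both the multiplication and the heap operation, and the same computation with $\varrho$ in place of $\lambda$. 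Since $\uU(RG)$ is assumed to be a paragon, so is $\uU(R)=\pi(\uU(RG))$.

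The argument is almost entirely formal, so I do not expect a genuine obstacle; the one point that carries real content is the equality $\uU(R)=\pi(\uU(RG))$, and within it the surjectivity of $\pi$ onto $\uU(R)$, which is exactly what the section $\iota$ (lifting a unit $v\in R$ to the unit $\iota(v)\in RG$) provides. One could equally avoid the general principle and verify the sub-heap and $\lambda^{1},\varrho^{1}$ conditions for $\uU(R)$ directly by transporting elements along $\iota$ and back along $\pi$; the two routes are interchangeable.
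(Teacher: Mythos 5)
Your proof is correct, but it is organised differently from the paper's. The paper works entirely with the embedding $r\mapsto re$ (where $e$ is the neutral element of $G$): it records the equivalence $re\in\uU(RG)\iff r\in\uU(R)$ and then verifies the paragon axioms for $\uU(R)$ by direct computation inside $RG$, namely $[re,r'e,r''e]=[r,r',r'']e\in\uU(RG)$ and $\lambda^{r'e}(re,r''e)=\lambda^{r'}(r,r'')e\in\uU(RG)$, pulling each conclusion back to $\uU(R)$ along the equivalence. You instead push forward along the augmentation $\pi$: you identify $\uU(R)=\pi(\uU(RG))$ (the section $\iota$ giving one inclusion, unit-preservation of $\pi$ the other) and then invoke a general closure fact, proved inline, that a surjective truss morphism carries paragons to paragons. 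Your route isolates a reusable lemma that the paper never states (its preliminaries only record the dual fact, that preimages of points under truss morphisms are paragons), and it sidesteps the coefficient-level check hidden in the paper's ``easy to check'' equivalence --- indeed the cleanest proof of the nontrivial direction, that $re\in\uU(RG)$ forces $r\in\uU(R)$, is exactly your observation that $\pi$ preserves units. What the paper's route buys in exchange is economy of means: it needs neither surjectivity nor the augmentation map at all, only the embedding, and so stays self-contained at the cost of being a purely ad hoc verification.
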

 \begin{proof}
If $e$ is  the neutral element of  $G,$ then it is easy to check that $re\in\uU(RG)$ if and only if $r\in \uU(R).$ In view of this observation and from the fact that $\uU(RG)$ is a paragon, if $r,r',r''\in \uU(R),$ then $[re,r'e,r''e]=[r,r',r'']e\in\uU(RG),$ which implies that $[r,r',r'']\in \uU(R)$. Hence $\uU(R)$ is a sub-heap of $\tT(R)$. Furthermore, for all $r\in R$, and  $r',r''\in \uU(R)$,
$$
\uU(RG)\ni \lambda^{r'e}(re,r''e)=[rr''e,rr'e,r'e]=[rr'',rr',r']e=\lambda^{r'}(r,r'')e,
$$ 
which implies that $ \lambda^{r'}(r,r'')\in\uU(R).$  Similarly, $\varrho^{r'}(r'',r)\in \uU(R)$. Therefore, $\uU(R)$ is a paragon in $\tT(R).$
\end{proof}

Now one can ask when units are an equivalence class? Although at this stage  we  are not able to provide sufficient conditions we still can provide some necessary conditions.
 \begin{proposition}\label{prop.units}
 Let $R$ be a ring and assume that $\uU(R)$ is a paragon in  $\tT(R)$. Then
 \begin{zlist}
 \item  For all $a,b\in\uU(R),$ $a-b\not\in \uU(R).$ 
\item Odd multiples of units in $R$ are units while even ones are not.
\item The quotient truss $\tT(R)/\uU(R)$ corresponds to a ring of characteristic 2.
 \end{zlist}
 \end{proposition}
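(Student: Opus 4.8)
The plan is to read off all three statements from the sub-heap congruence $\sim\,:=\,\sim_{\uU(R)}$ attached to the paragon $\uU(R)$, which is a genuine congruence precisely because $\uU(R)$ is a paragon. Two observations do the work. First, being a sub-heap containing $1$, the set $\uU(R)$ is exactly the $\sim$-class of $1$ by \cite[Proposition~2.10]{Brz:par}; thus $t\in\uU(R)$ iff $t\sim 1$, and provided $R\neq 0$ (so that $0\notin\uU(R)$) the classes of $0$ and of $1$ are distinct. Second, every difference of units lies in the $\sim$-class of $0$: for $a,b\in\uU(R)$ one has $[a-b,0,b]=a\in\uU(R)$, so $a-b\sim 0$ straight from the definition of $\sim$. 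Statement~(1) is then immediate: if $a-b$ were a unit it would satisfy $a-b\sim 1$, and combined with $a-b\sim 0$ this would give $0\sim 1$, contradicting the distinctness of the two classes. Taking $a=1$, $b=-1$ (recall $-1\in\uU(R)$) records the key identity $2=1-(-1)\sim 0$, on which the remaining parts rest.

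For statement~(2) I would prove by induction on $n$ that, for any unit $u$, the multiple $nu$ satisfies $nu\sim 1$ for $n$ odd and $nu\sim 0$ for $n$ even; since $\uU(R)$ is the $\sim$-class of $1$, this is exactly the claim. The base case is $0\cdot u=0\sim 0$. For the step, write $(n+1)u=[nu,0,u]$ and use that $\sim$ is a congruence together with $u\sim 1$ and $0\sim 0$ to get $(n+1)u\sim[c,0,1]$, where $c\in\{0,1\}$ represents the class of $nu$; since $[0,0,1]=1$ and $[1,0,1]=2\sim 0$, the parity of the class flips at each step, as desired. Negative multiples are covered by applying the same argument to the unit $-u$.

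For statement~(3) I would first make the quotient ring-type by locating its absorber. Since $0$ is the absorber of $\tT(R)$ and the canonical surjection $\pi\colon\tT(R)\to\tT(R)/\uU(R)$ is a truss epimorphism, the image $\pi(0)$ is an absorber of the quotient; hence $\tT(R)/\uU(R)$ is ring-type and corresponds to a ring $S$ with zero $\pi(0)$, addition $[-,\pi(0),-]$, and multiplicative identity $\pi(1)$ (one checks distributivity using that $\pi(0)$ absorbs). Characteristic $2$ now follows from the key identity: $\pi(1)+\pi(1)=[\pi(1),\pi(0),\pi(1)]=\pi([1,0,1])=\pi(2)=\pi(0)$, the zero of $S$, while $\pi(1)\neq\pi(0)$ because $1$ does not lie in the class of $0$. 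Thus $S$ is a nontrivial ring with $1+1=0$.

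All the computations here are one-liners; the substantive move is the framing itself, namely identifying $\uU(R)$ with the $\sim$-class of $1$ and differences of units with the $\sim$-class of $0$, after which each part is short. The points to handle with care are the exclusion of the degenerate zero ring (where $\uU(R)=\{0\}$ and every assertion fails) and the verification that the quotient is genuinely ring-type with characteristic exactly $2$ rather than $1$; both of the latter reduce to the single inequality $\pi(0)\neq\pi(1)$.
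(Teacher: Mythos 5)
Your proof is correct and takes essentially the same route as the paper's: both read all three parts off the congruence determined by the paragon $\uU(R)$, using that $\uU(R)$ is the class of $1$ while differences of units lie in the disjoint class of $0$ --- the paper packages this via the translate $\uU(R)_{b}^{0}$ of Proposition~\ref{prop.para.shift}, you verify it directly from the sub-heap relation --- and both prove (2) by induction and (3) by computing $\overline{1}+\overline{1}=\overline{2}=\overline{0}$. Your explicit handling of the zero ring and of $\pi(0)\neq\pi(1)$ is slightly more careful than the paper, which leaves these degenerate points implicit.
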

 \begin{proof}
 (1) If $\uU(R)$ is a paragon in $\tT(R)$ or, equivalently by Corollary~\ref{cor.cong.ring} an equivalence class for a congruence in $R$, then its translate $\uU(R)_{b}^{0}$  is an ideal in $R$ (see Proposition~\ref{prop.para.shift}). Explicitly,  $\uU(R)_{b}^{0}$ consists of elements of the form $a-b$ where $a\in\uU(R)$. Since $0\not\in \uU(R)$, $\uU(R)_{b}^{0}\cap\uU(R)=\emptyset$ by Proposition~\ref{prop.para.shift}(3) and hence $a-b\not\in \uU(R)$. \smallskip

(2) Note that for any $a\in \uU(R)$, $-a\in \uU(R)$. We first prove by induction that  $(2n+1)a\in \uU(R)$, for all $n\in \NN$. If $n=0$, then the statement is obvious.
 Now assume that $(2n+1)a\in \uU(R),$ then 
 $$
 (2n+3)a = [(2n+1)a,-a,a]\in\uU(R),
 $$
 so for all positive odd numbers and thus also the negative ones the assertion is true. Since any even multiple is a difference of two odd multiples, the second assertion follows by (1).\smallskip
 
 (3) The equivalence class of $1\in R$,  $\overline{1}=U(R)$ is the identity in the quotient truss $\tT(R)/\uU(R)$. Since both $1$ and $-1$ are units in $R$, $\overline{-1} = \overline{1}$, which implies that $\overline{1} +\overline{1} = 0$ in the ring corresponding to $\tT(R)/\uU(R)$.
 \end{proof}
 
 \begin{example}\label{ex.z4}
 Let us consider ring $\ZZ_{4}$ and its associated  truss $\tT(\ZZ_{4})$. It is easy to check that the set $\uU(\ZZ_{4})=\{1,3\}$ is a paragon in $\tT(\ZZ_{4}).$ Therefore, $\uU(\ZZ_{4})$ is an element in the quotient of $\ZZ_{4}$ by the ideal $\uU(\ZZ_{4})_{1}^{0}=\{1,3\}_{1}^{0}=\{0,2\}$ i.e. $\ZZ_{2}.$
 \end{example}
 
Example~\ref{ex.z4} shows that rings in which units form equivalence classes of congruences exist. The following theorem classifies all rings in which units form a paragon and the quotient truss corresponds to the ring $\ZZ_2$.
\begin{theorem}\label{thm.z2}
For a ring $R$, the following statements are equivalent:
\begin{zlist}
\item The units $\uU(R)$ form a paragon in $\tT(R)$ and $\tT(R)/\uU(R)\cong\tT(\ZZ_{2})$.
\item For all $r\in R$, either $r\in \uU(R)$ or $1-r\in \uU(R)$.
\end{zlist}
\end{theorem}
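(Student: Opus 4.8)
The plan is to work throughout with the set $N:=R\setminus\uU(R)$ of non-units and to show that condition (2) is equivalent to $N$ being a two-sided ideal of $R$ with $R/N\cong\ZZ_2$. Granting this, $\uU(R)=1+N$ is a single coset of $N$, hence a point of the quotient ring $R/N$, and Corollary~\ref{cor.cong.ring} turns it into a paragon of $\tT(R)$ whose quotient is $\tT(\ZZ_2)$. I read the ``either $\dots$ or'' in (2) as a genuine dichotomy (precisely one of $r,1-r$ is a unit), which is what the direction (1)$\Rightarrow$(2) actually produces.

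For (1)$\Rightarrow$(2): since $\uU(R)$ is a paragon it is a sub-heap and hence a single class of $\sim_{\uU(R)}$, and as $\tT(R)/\uU(R)\cong\tT(\ZZ_2)$ has only two classes, the other is $N$. Writing $\pi\colon\tT(R)\to\tT(\ZZ_2)$ for the induced surjective truss morphism, we have $\pi(\uU(R))=1$ (the image of the identity) and $\pi(N)=0$ (the absorber). If $r\in\uU(R)$, then Proposition~\ref{prop.units}(1) applied to the two units $1,r$ gives $1-r\notin\uU(R)$; if $r\in N$, then, using $0\in N$ and $1-r=[1,r,0]$, we get $\pi(1-r)=[\pi(1),\pi(r),\pi(0)]=[1,0,0]=1$, so $1-r\in\uU(R)$. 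Thus for every $r$ precisely one of $r,1-r$ is a unit, which is (2).

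For (2)$\Rightarrow$(1) the routine part is the additive structure of $N$: clearly $0\in N$ and $-N=N$ (if $-n$ were a unit, so would be $n=(-1)(-n)$), and $N$ is closed under addition, for if $a,b\in N$ but $u:=a+b\in\uU(R)$ then $x:=u^{-1}a\in N$ (else $a=ux$ is a unit), whence (2) forces $1-x=u^{-1}b\in\uU(R)$ and so $b=u(u^{-1}b)\in\uU(R)$, a contradiction. The step I expect to be the main obstacle is multiplicative absorption in the possibly non-commutative case, where a one-sided inverse need not yield a unit; I would settle it by an idempotent argument. Suppose $n\in N$, $t\in R$ and $tn=u\in\uU(R)$, and set $s:=nu^{-1}$; then $s\in N$ (else $n=su$ is a unit) while $ts=1$, so $e:=st$ satisfies $e^2=s(ts)t=st=e$. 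If $e$ were a unit then $e=1$, making $s$ a two-sided inverse of $t$ and hence a unit, a contradiction, so $e\in N$; then (2) forces $1-e\in\uU(R)$, but $1-e$ is idempotent, so being a unit it equals $1$ and $e=0$, i.e.\ $st=0$. Now $t=(ts)t=t(st)=0$ contradicts $tn=u$. Hence $tN\subseteq N$, and $Nt\subseteq N$ follows symmetrically, so $N$ is a two-sided ideal.

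It remains to pin down the quotient, and this is exactly where the exclusivity in (2) is used: $n\in N$ gives $1-n\in\uU(R)$ while $u\in\uU(R)$ gives $1-u\in N$, so $\uU(R)=1-N=1+N$; together with $R=N\sqcup\uU(R)$ this shows $R/N=\{N,\,1+N\}$ has exactly two elements, whence $R/N\cong\ZZ_2$ and $\tT(R)/N\cong\tT(\ZZ_2)$. Finally $\uU(R)=1+N=N_0^1$ is a point of $R/N$, so by Corollary~\ref{cor.cong.ring} it is a paragon in $\tT(R)$; and since $N_0^1=\tau_0^1(N)$, the remark following Proposition~\ref{prop.para.shift} gives $\tT(R)/\uU(R)=\tT(R)/N_0^1\cong\tT(R)/N\cong\tT(\ZZ_2)$, establishing (1).
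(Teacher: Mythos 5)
Your proof is correct, and in the substantive direction (2)$\Rightarrow$(1) it takes a genuinely different route from the paper's. The paper simply cites the standard characterisation of (possibly non-commutative) local rings to conclude that the non-units $J$ form an ideal, and then verifies the sub-heap and paragon conditions for $\uU(R)$ by direct computation, writing $[u,v,w]=(u-1)+(1-v)+w$ and $[ru,r,1]=r(u-1)+1$ with $u-1,\,1-v\in J$, before counting the two classes. You instead prove the local-ring statement from scratch---your idempotent argument (from $ts=1$ one gets the idempotent $e=st$; a non-unit idempotent forces $1-e$ to be a unit idempotent, hence $1-e=1$, $st=0$ and $t=0$) is exactly what neutralises the one-sided-inverse problem in the non-commutative case, which the paper leaves to the citation---and then, rather than checking the paragon axioms by hand, you feed the purely ring-theoretic conclusion ($N$ a two-sided ideal, $\uU(R)=1+N=N_0^1$, $R/N\cong\ZZ_2$) into Corollary~\ref{cor.cong.ring} and the remark following Proposition~\ref{prop.para.shift}. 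What this buys is a self-contained proof exhibiting the theorem as an instance of the coset--paragon correspondence that is the theme of Section~\ref{sec.eqcl}; what the paper's computation buys is brevity and an explicit display of how the heap operations interact with units. A further point in your favour: you state explicitly that (2) must be read as an exclusive ``either\dots or'' and you prove the exclusivity in (1)$\Rightarrow$(2) via Proposition~\ref{prop.units}(1). This is not pedantry: the paper's proof of (2)$\Rightarrow$(1) uses the exclusive reading tacitly (precisely in asserting $u-1,1-v\in J$), and the inclusive reading would make that implication false, since $\ZZ_3$ satisfies ``$r$ or $1-r$ is a unit'' for every $r$ while $\uU(\ZZ_3)=\{1,2\}$ is not even a sub-heap of $\tT(\ZZ_3)$.
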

\begin{proof}
Assume first that the statement (1) holds. Since $\tT(R)/\uU(R)\cong\tT(\ZZ_{2})$ there are two disjoint paragons in $\tT(R)$, $\uU(R)$ and $J$, covering the whole of $\tT(R)$. Since $0\not\in \uU(R)$, $0\in J$ and hence $J$ is an ideal. Since $1\in \uU(R)$,
$
\uU(R) = \tau_0^{1}(J),
$
by Proposition~\ref{prop.para.shift}. Take any $r\in R$, then either $r\in \uU(R)$ or $r\in J$, in which case also $-r\in J$ (as $J$ is an ideal) and
$
1-r = \tau_0^{1}(-r) \in \uU(R).
$
Hence the statement (2) holds.

In the converse direction, the assumption (2) means in particular that $R$ is a local ring, i.e.\ the set of non-units, say $J$, is an ideal in $R$. Note that if $u\in \uU(R)$ and $r\in J$, then $u+r \in \uU(R)$, for should $u+r$ not be a unit, then $1-u-r$ would be a unit, hence not an element of $J$, which would contradict the fact that $J$ is an ideal, as $1-u-r\in J$. 

Take any $u,v,w\in \uU(R)$. Then
$$
[u,v,w] = u-v+w = (u-1) +(1-v) +w\in \uU(R),
$$
by the preceding discussion, as $u-1, 1-v\in J$ and $J$ is an ideal. Hence $\uU(R)$ is a sub-heap of $\tT(R)$. Next, take any $r\in R$ and $u\in \uU(R)$. Then, by the same token
$$
[ru,r,1] = r(u-1) +1 \in \uU(R).
$$
Therefore, $\uU(R)$ is a paragon in $\tT(R)$.

Finally, take any $r \in R$. If $r\in \uU(R)$, then its class $\bar{r} \in \tT(R)/\uU(R)$ is equal to $\uU(R) = \bar{1}$. If $1-r\in \uU(R)$, then also $r-1\in \uU(R)$, and 
$$
r = [r-1,-1,0] = \tau_{-1}^0(r-1)\in \tau_{-1}^0(\uU(R)) ,
$$
so that $r\in \bar{0}$. Thus there are two classes $\bar{0}, \bar{1}$ and the corresponding ring is $\ZZ_2$.
\end{proof} 

 \begin{corollary}\label{cor.2k}
The set $\uU(\ZZ_{n})$ is a paragon in $\tT(\ZZ_{n})$ if and only if $n=2^k$ for some $k\in \NN$. 

 \end{corollary}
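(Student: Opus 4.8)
The plan is to prove the two implications separately, using Theorem~\ref{thm.z2} for the easy direction and Proposition~\ref{prop.units}(2) for the converse.

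For the implication $n=2^k \Rightarrow \uU(\ZZ_n)$ is a paragon, I would simply verify condition (2) of Theorem~\ref{thm.z2}. In $\ZZ_{2^k}$ an element is a unit precisely when it is odd (as $\gcd(r,2^k)=1$ iff $r$ is odd). Hence for any $r\in\ZZ_{2^k}$, either $r$ is odd, so $r\in\uU(\ZZ_{2^k})$, or $r$ is even, in which case $1-r$ is odd and therefore a unit. Thus the dichotomy in Theorem~\ref{thm.z2}(2) holds, and that theorem immediately yields that $\uU(\ZZ_{2^k})$ is a paragon (in fact with quotient $\tT(\ZZ_2)$). The degenerate case $k=0$, i.e.\ $n=1$, is the trivial ring, for which $\uU(\ZZ_1)=\ZZ_1$ is trivially a paragon.

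For the converse, assume $\uU(\ZZ_n)$ is a paragon with $n\geq 2$, and apply Proposition~\ref{prop.units}(2) to the unit $1\in\ZZ_n$: odd multiples $\overline{2j+1}$ of $1$ are units, while even multiples $\overline{2j}$ are not. First I would rule out $n$ odd: if $n$ were odd, then $0=\bar n$ would be an odd multiple of $1$ and hence a unit, which is impossible in a nonzero ring. So $n$ is even, and then the odd multiples of $1$ are exactly the odd residues $\{1,3,\dots,n-1\}$ (parity being well defined modulo an even number), all of which must be units, i.e.\ coprime to $n$.

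The remaining step, which is the main (though modest) obstacle, is to convert the condition ``every odd residue is coprime to $n$'' into ``$n$ is a power of $2$''. I would argue by contradiction: write $n=2^k m$ with $m$ odd; since $n$ is even, $k\geq 1$, so $1\leq m<n$ and $m$ is an odd residue. If $m>1$, then $\gcd(m,n)=m>1$, so $m$ is a non-unit odd residue, contradicting the previous paragraph. Hence $m=1$ and $n=2^k$, completing the proof.
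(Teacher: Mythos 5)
Your proof is correct, and it splits the work differently from the paper. The paper proves both directions at once by first invoking Proposition~\ref{prop.units}(3): since any quotient ring of $\ZZ_n$ is cyclic, the quotient truss $\tT(\ZZ_n)/\uU(\ZZ_n)$ having characteristic $2$ forces it to be $\tT(\ZZ_2)$, so Theorem~\ref{thm.z2} applies in full and reduces the paragon property to the condition that for every $m$ either $\gcd(m,n)=1$ or $\gcd(1-m,n)=1$, which is then observed to hold exactly when $n$ is a power of $2$. You instead use Theorem~\ref{thm.z2} only for the forward implication (where your verification of its condition (2) for $n=2^k$ coincides with the paper's), and for the converse you bypass the identification of the quotient entirely, applying Proposition~\ref{prop.units}(2) to the unit $1$: all odd residues must be coprime to $n$, which after ruling out odd $n$ forces $n=2^k$. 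What your route buys is slight economy and robustness: you never need the small auxiliary fact that a characteristic-$2$ quotient of $\ZZ_n$ must be $\ZZ_2$, and your separate treatment of $n=1$ is a genuine point of care, since Theorem~\ref{thm.z2} as stated tacitly assumes a nonzero ring (in the zero ring its condition (2) holds vacuously while the quotient is trivial, not $\ZZ_2$); the paper's argument, funneled through that theorem, glosses over this degenerate case. What the paper's route buys is the stronger conclusion delivered along the way, namely that whenever $\uU(\ZZ_n)$ is a paragon the quotient truss is automatically $\tT(\ZZ_2)$, which your converse direction does not re-derive.
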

 \begin{proof}
 Since the quotient truss $\tT(\ZZ_{n})/\uU(\ZZ_{n})$ must correspond to a ring of characteristic 2, in the case studied it must correspond to $\ZZ_2$. Thus necessarily we are in the situation of Theorem~\ref{thm.z2}. Hence $\uU(\ZZ_{n})$ is a paragon in $\tT(\ZZ_{n})$ if and only if, for all $m\in \ZZ_n$
 $\gcd(m,n) =1$ or $\gcd(1-m,n) =1$. This is equivalent to $n$ having only even prime factors as needed.
\end{proof}

\begin{example}\label{ex.q}
(1) Let us consider the subring of $\QQ$ of the form
 $$
 \frac{\ZZ}{2\ZZ+1}:=\left\{\frac{n}{2p+1}\; \biggr|\; n,p\in \ZZ\right\}.
 $$
 Observe that the set of all invertible elements of $\frac{\ZZ}{2\ZZ+1}$ is 
 $$
 \uU\left(\frac{\ZZ}{2\ZZ+1}\right)=\oO(\QQ):=\left\{\frac{2q+1}{2p+1}\;\bigr|\; q,p\in \ZZ\right\}.
 $$ 
 Clearly, $\oO(\QQ)$ is not a subring but one can easily check that $\oO(\QQ)$ is a well defined sub-truss (and also a paragon) of $\tT(\QQ).$ The elements $x$ of  $\frac{\ZZ}{2\ZZ+1}$ have either an odd numerator, in which case they are invertible or an even numerator, in which case $1-x$ has an odd numerator, hence invertible.
Note in passing that $\oO(\QQ)$ is an example of a two-sided brace with operation $-+_1-:=[-,1,-].$

(2) Let $\FF$ be a field and consider the local ring $R = \FF[x]/(x^n)$. The polynomials with root 0 are nilpotent hence not invertible. On the other hand polynomials with a constant coefficient are invertible. Explicitly, if $p(x) = \alpha + q(x)$, where $q(x)$ is nilpotent and $\alpha\neq 0$, then 
\begin{equation}\label{inv.pol}
p(x)^{-1} = \alpha^{-1} - \alpha^{-2}\left(q(x)+q(x)^2+\ldots + q(x)^{n-1}\right).
\end{equation}
Hence $R$ satisfies assumptions of Theorem~\ref{thm.z2} and so the set of polynomials with a non-zero constant coefficient is a paragon, and the quotient truss corresponds to the ring $\ZZ_2$.

(3) The situation described in the preceding example can be adapted to polynomial rings with coefficients in general commutative rings. Consider $R=Q[x]/(x^n)$. If $Q$ is not an integral domain, then $R$ does not necessarily have the property of Theorem~\ref{thm.z2}, so neither that the units of $R$ form a paragon nor, in case they do, that the quotient paragon will be associated to $\ZZ_2$ is guaranteed.  By the arguments similar to those in the proof of Lemma~\ref{lem.g.ring} one easily finds that if $\uU(R)$ is a paragon, then so is $\uU(Q)$. Using this information, we now specify $Q=\ZZ_m$, and then by Corollary~\ref{cor.2k}, necessarily $m=2^k$. Units of $\ZZ_{2^k}[x]/(x^n)$ are exactly all polynomials $p(x)$ such that $p(0)$ is coprime with $2$. Clearly, if $p(x)$ is a unit, then $p(0)$ must be a unit, hence coprime with $2$. In the converse direction, the formula \eqref{inv.pol} gives the inverse to any polynomial with the constant term that is a unit in $\ZZ_{2^k}$. Let $p(x)$ be any element of  $\ZZ_{2^k}[x]/(x^n)$. If $p(0)$ is coprime with 2, then $p(x)$ is a unit. Otherwise, $1-p(x)$ has a constant term coprime with 2, hence it is a unit. Thus assertions of Theorem~\ref{thm.z2} are satisfied and we conclude that $\uU(\ZZ_{2^k}[x]/(x^n))$ is a paragon in $\tT(R)$ and the quotient truss is associated to $\ZZ_{2}$.
 \end{example}
 
\begin{lemma}
Let $T$ be a unital truss. If the set of units $\uU(T)$ is a sub-heap of $T$, then $\uU(T)$ is a brace.
\end{lemma}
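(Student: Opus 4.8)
The plan is to verify that $\uU(T)$, equipped with the heap operation and multiplication inherited from $T$, is a sub-truss of $T$ that is moreover a group under its multiplication, and then to invoke the characterization of braces recalled in Section~\ref{sec.braces} (that is, \cite[Corollary~3.10]{Brz:par}): any truss whose multiplication makes it a group is a two-sided brace with the retract addition $-+-:=[-,1,-]$ and the same multiplication.

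First I would check that $\uU(T)$ is closed under both truss operations. Closure under multiplication is the standard fact that a product of units is a unit: if $a,b\in\uU(T)$ have two-sided inverses $a^{-1},b^{-1}$, then $b^{-1}a^{-1}$ is a two-sided inverse of $ab$, using associativity of the multiplication and $1\in\uU(T)$. Closure under the heap operation $[-,-,-]$ is exactly the hypothesis that $\uU(T)$ is a sub-heap. Since the truss axioms (1)--(5) are universally quantified identities holding throughout $T$, they persist on the subset $\uU(T)$; hence $(\uU(T),[-,-,-],\cdot)$ is itself a truss, unital with the same identity $1$.

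Next I would note that $(\uU(T),\cdot)$ is a group: the identity $1$ belongs to $\uU(T)$, multiplication is associative (inherited from $T$), and every element of $\uU(T)$ has a multiplicative inverse by the very definition of a unit. Thus $\uU(T)$ is a truss that is a group with respect to its multiplication, and applying \cite[Corollary~3.10]{Brz:par} to it produces a two-sided brace on $\uU(T)$ with addition $[-,1,-]$, which is the desired conclusion.

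I do not expect a genuine obstacle here: the brace distributivity laws relating $+$ and $\cdot$ are entirely absorbed into the cited corollary. The only points needing care are confirming closure of $\uU(T)$ under multiplication (so that it is a bona fide sub-truss rather than merely a sub-heap) and observing that $1\in\uU(T)$, which makes the retract at $1$ available as the additive group; once these are settled the result follows at once.
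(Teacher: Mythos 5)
Your proposal is correct and follows essentially the same route as the paper's own proof: observe that $\uU(T)$ is a sub-heap by hypothesis and a group under the truss multiplication, so it is a (unital) truss in which every element is invertible, and then invoke \cite[Corollary~3.10]{Brz:par}. The paper states this in one line; you have merely filled in the routine verifications (closure under multiplication, persistence of the truss identities on a subset), which are fine.
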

\begin{proof}
Assume that $\uU(T)$ is a sub-heap and since $\uU(T)$ is a group with truss multiplication,  $\uU(T)$ is a brace-type truss in which every element is invertible, and hence a brace  by \cite[Corollary~3.10]{Brz:par}.
\end{proof}

We conclude this section with the derivation of abelian cyclic braces of \cite[Proposition~4]{Rum:cla} as quotients of a commutative truss by a paragon.
\begin{proposition}\label{prop.z,mod2n}
Let $a$ be a positive integer and let $\ZZ^{(a)}$ denote the commutative unital truss with the heap operation derived from the addition in $\ZZ$, and the multiplication, 
\begin{equation}\label{z.a}
m\cdot n = amn+m+n, \qquad \mbox{for all $m,n\in \ZZ$};
\end{equation}
see \cite[Corollary~3.53]{Brz:par}. 
\begin{zlist}
\item For all $N\in \ZZ_+$, 
$$
N\ZZ = \{mN\;|\; m\in \ZZ\},
$$ 
is a paragon in $\ZZ^{(a)}$.
\item For all $k\geq 1$, $\ZZ^{(2)}/2^{k+1}\ZZ$ is a brace-type truss in which every element is a unit (hence a brace) and
$$
\uU\left(\ZZ^{(2)}/2^{k+1}\ZZ\right) = C_2\oplus C_{2^k}.
$$
\end{zlist}
\end{proposition}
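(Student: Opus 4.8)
The plan is to handle the two parts separately. Throughout I use that $\ZZ^{(a)}$ is commutative and that its multiplicative identity is $0$, since $m\cdot 0 = a\, m\cdot 0 + m + 0 = m$. For (1), $N\ZZ$ is visibly a sub-heap because $[mN,m'N,m''N] = (m-m'+m'')N$. For the paragon condition I would choose $q=0\in N\ZZ$; commutativity forces $\lambda^{0}$ and $\varrho^{0}$ to agree, so a single computation suffices. Using $x\cdot 0 = x$ and $x\cdot(mN) = axmN + x + mN$ one obtains
$$
\lambda^{0}(x,mN) = [x\cdot(mN),\,x\cdot 0,\,0] = (axm+m)N \in N\ZZ
$$
for every $x\in\ZZ$ and $mN\in N\ZZ$, which is exactly what is needed.

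For (2) I would first pin down the quotient. With $N=2^{k+1}$ and $0\in N\ZZ$, the sub-heap relation collapses to $x\sim_{N\ZZ}y\iff x-y\in N\ZZ$, so the underlying set is $\ZZ_{2^{k+1}}$; since the canonical surjection is a truss epimorphism its operations are the induced ones, namely $[\bar x,\bar y,\bar z]=\overline{x-y+z}$ and $\bar m\cdot\bar n=\overline{2mn+m+n}$, with multiplicative identity $\bar 0$.

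The decisive observation is that $m\mapsto 2m+1$ conjugates this multiplication into ordinary multiplication, because $(2m+1)(2n+1)=2(2mn+m+n)+1$. Reducing this map modulo $2^{k+1}$ is only two-to-one (its kernel is $\{\bar 0,\overline{2^{k}}\}$) and lands in the too-small group $(\ZZ_{2^{k+1}})^{\times}$, so instead I would raise the modulus by one power and set
$$
g\colon \ZZ_{2^{k+1}}\longrightarrow (\ZZ_{2^{k+2}})^{\times},\qquad g(\bar m)=\overline{2m+1}\pmod{2^{k+2}}.
$$
The displayed identity shows $g$ is multiplicative with $g(\bar 0)=\bar 1$, and the implication $2m+1\equiv 2m'+1\pmod{2^{k+2}}\Rightarrow m\equiv m'\pmod{2^{k+1}}$ shows $g$ is well defined and injective; as both sides have $2^{k+1}$ elements, $g$ is a bijective monoid morphism onto a group. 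Hence $(\ZZ_{2^{k+1}},\cdot)$ is itself a group, every class is a unit, and by \cite[Corollary~3.10]{Brz:par} the quotient is a two-sided brace. Transporting the group of units through $g$ and invoking the standard structure $(\ZZ_{2^{n}})^{\times}\cong C_2\oplus C_{2^{n-2}}$ for $n\geq 3$, applied with $n=k+2$, yields $\uU(\ZZ^{(2)}/2^{k+1}\ZZ)\cong C_2\oplus C_{2^{k}}$.

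The main obstacle is exactly this choice of modulus: one must avoid reducing $m\mapsto 2m+1$ modulo $2^{k+1}$, where it degenerates into a two-to-one map onto a group of order $2^{k}$, and instead realise the multiplicative group of the quotient as the units modulo $2^{k+2}$, one $2$-adic level higher. Once that is in place, the remaining work is the routine check that $g$ preserves the operations together with the recollection of the structure of the $2$-adic unit groups.
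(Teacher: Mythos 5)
Your proof is correct. Part (1) coincides with the paper's argument (same choice of $q=0$, same computation, same use of commutativity), but for part (2) you take a genuinely different route. The paper works entirely inside the quotient: it proves by induction the power formula $m^{\cdot n}=\bigl((am+1)^n-1\bigr)/a$, deduces $m^{\cdot 2^k}\equiv 0 \bmod 2^{k+1}$ (so every element is a unit and the exponent of the unit group is at most $2^k$), then shows that $1$ has order exactly $2^k$ by proving $3^{2^k}=1+n_k2^{k+2}$ with $n_k$ odd, and finally pins down the group by classification: an abelian group of order $2^{k+1}$, not cyclic, containing an element of order $2^k$, must be $C_2\oplus C_{2^k}$. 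You instead globalize the identity $(2m+1)(2n+1)=2(2mn+m+n)+1$ into an explicit bijective monoid morphism $g:\ZZ_{2^{k+1}}\to \uU(\ZZ_{2^{k+2}})$, $\bar m\mapsto \overline{2m+1}$, which simultaneously shows that the multiplicative monoid of the quotient is a group (so the quotient is a brace by \cite[Corollary~3.10]{Brz:par}) and identifies it with the classical unit group $\uU(\ZZ_{2^{k+2}})\cong C_2\oplus C_{2^k}$. The two arguments rest on the same algebraic fact --- the paper's power formula is precisely the statement that $m\mapsto am+1$ intertwines truss powers with ordinary powers --- but your packaging of it as an isomorphism at the raised modulus $2^{k+2}$ (the key point, as you note: at modulus $2^{k+1}$ the map degenerates to two-to-one) delivers the entire group structure in one step, at the price of citing the standard structure theorem for $2$-power unit groups, which the paper in effect re-derives from scratch; the paper's proof is thus more self-contained, while yours is shorter and more conceptual, and in addition exhibits the unit-group isomorphism explicitly rather than only up to abstract classification.
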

\begin{proof}
(1) $N\ZZ$ is an abelian subgroup of $\ZZ$, hence a sub-heap of $\ZZ$. Note that 0 is the identity in $\ZZ^{(a)}$. Hence, for all $m\in \ZZ$ and $nN\in N\ZZ$,
$$
\begin{aligned}
m\cdot_0(nN) &= [amnN +m +nN, m,0]\\
& = amnN +m +nN-m = (amn+n)N \in N\ZZ.
\end{aligned}
$$
Since $0\in N\ZZ$, the assertion follows.

(2) First, one easily proves by induction that, for all $m\in \ZZ$ and $n\in \NN$,
\begin{equation}\label{power}
m^{\cdot n} = \frac{(am+1)^n-1}{a},
\end{equation}
where $m^{\cdot n}$ means the $n$-th power with respect of the product \eqref{z.a} in $\ZZ^{(a)}$. Using formula \eqref{power} one proves that, for all $k\geq 1$,
\begin{equation}\label{order}
m^{\cdot 2^k} \equiv 0 \!\!\!\mod\! 2^{k+1},
\end{equation}
in $\ZZ^{(2)}$. Indeed, if $k=1$,
$$
m^{\cdot 2} = \frac{(2m+1)^2-1}{2}= 2m(m+1) \equiv 0 \!\!\!\mod\! 4.
$$
Next note that
$$
m^{\cdot 2^{k+1}} = \frac{(2m+1)^{2^{k+1}}-1}{2} =  \frac{(2m+1)^{2^{k}}-1}{2}((2m+1)^{2^{k}}+1).
$$
Hence if the first factor is divisible by $2^{k+1}$, then $
m^{\cdot 2^{k+1}}$ is divisible by $2^{k+2}$, since the second factor is even. Thus the stated congruence relation follows for all $k$ by the principle of induction.

Since 0 is the identity in $\ZZ^{(2)}$, the congruence relation \eqref{order} implies that every element in $\ZZ^{(2)}/2^{k+1}\ZZ$ is a unit and that  elements of $\uU\left(\ZZ^{(2)}/2^{k+1}\ZZ\right)$ have order not greater that $2^k$, hence  $\uU\left(\ZZ^{(2)}/2^{k+1}\ZZ\right)$ is not a cyclic group. We will show that $1$ has the maximal order $2^k$. Since
$$
1^{\cdot 2^k} = \frac{3^{2^k}-1}{2},
$$
this is equivalent to the statement that $3$ is an order $2^k$ element in the group of units $\uU(\ZZ_{2^{k+2}})$. This follows from the (inductively proven) fact that, for all $k$,  
$$
3^{2^k} = 1+n_k 2^{k+2},
$$
where $n_k$ is odd and the observation that the order of any element of $\uU(\ZZ_{2^{k+2}})$ is a power of 2.

Thus, $\uU\left(\ZZ^{(2)}/2^{k+1}\ZZ\right)$ is an abelian group of order $2^{k+1}$ that is not a cyclic group, but contains an element of order $2^k$, hence it must be isomorphic to $C_2\oplus C_{2^k}$.
\end{proof}

\section{Extensions}\label{sec.ext}

By the standard construction, given a ring $R$ and an $R$-bimodule $M$ one can define an extension of $R$ by $M$ as a ring with the abelian group structure $R\oplus M$ and multiplication $(r,m)(r',m') = (rr',rm'+mr')$. In this section we show that this construction can be extended to one-sided modules, but then the result is a truss rather than a ring. 

Let us start with the following motivating observations. 
\begin{example}\label{ex.ext.group}
Let $G$ be an abelian group. Then the ring $\End{}{G}$ acts on $G$ by evaluation, i.e.\
$\End{}{G}\times G\to G $, $(f,g)\mapsto f(g).$  One easily checks that the following binary operation on $\End{}{G}\oplus G$
\begin{equation}\label{end:mult}
(f,g)(f',g')=(f\circ f',g+f(g')), \qquad \mbox{for all $f,f'\in \End{}{G}$, $g,g'\in G$},
\end{equation}
is associative. However, the operation \eqref{end:mult} does not distribute over the addition $\End{}{G}\oplus G$, since for all $f,f',f''\in \End{}{G}$ and $g,g',g''\in G$, on one hand
$$
(f,g)((f',g')+(f'',g''))=(f\circ f'+f\circ f'',g+f(g')+f(g'')),
$$
while on the other
$$
(f,g)(f',g')+(f,g)(f'',g'')= (f\circ f'+f\circ f'',g+g+f(g')+f(g'')).
$$
Notwithstanding, it is easy to check that the operation \eqref{end:mult} distributes over the ternary heap operation associated to the addition in $\End{}{G}\oplus G$. In summary, the extension of the endomorphism ring of a group by this group is a truss.
\end{example}

Now we place Example~\ref{ex.ext.group} in a more general framework of extensions of trusses by one-sided modules. 

\begin{theorem}\label{thm.ext}
Let $T$ be a truss and let $M$ be a left $T$-module. Then, for all $e\in {M},$ $T\times M$ is a truss with the Cartesian product heap structure and multiplication
\begin{equation}\label{prod.ext}
(t,m)(t',m')=(tt', [m,t\cdot e,t\cdot m']), 
\end{equation}
for all $t,t'\in T$ and $m,m'\in M$. We denote this truss by $T[M;e]$ and call it an \textbf{extension of $T$ by $M$}.
\end{theorem}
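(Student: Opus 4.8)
The plan is to check the defining conditions of a truss for $T[M;e]$, bearing in mind that associativity of the multiplication \eqref{prod.ext} (already highlighted in Example~\ref{ex.ext.group}) must be verified alongside conditions (1)--(5). Conditions (1)--(3) are immediate: the Cartesian product of two abelian heaps is again an abelian heap under the componentwise ternary operation, so $(T\times M,[-,-,-])$ is an abelian heap and the multiplication plays no role here. Thus the whole content of the theorem lies in associativity of \eqref{prod.ext} and in the two distributive laws (4) and (5). Throughout, I would dispose of the purely heap-theoretic identities that arise by passing to a retract, in which $[a,b,c]=a-b+c$ in the associated abelian group; each such identity then collapses to a one-line computation in an abelian group.

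For associativity, the first components of $((t,m)(t',m'))(t'',m'')$ and $(t,m)((t',m')(t'',m''))$ agree because multiplication in $T$ is associative. For the second components it remains to prove
\[
[[m,t\cdot e,t\cdot m'],(tt')\cdot e,(tt')\cdot m'']=[m,t\cdot e,t\cdot[m',t'\cdot e,t'\cdot m'']].
\]
I would expand the right-hand side using the module distributivity $t\cdot[m',t'\cdot e,t'\cdot m'']=[t\cdot m',t\cdot(t'\cdot e),t\cdot(t'\cdot m'')]$ together with associativity of the action $t\cdot(t'\cdot x)=(tt')\cdot x$, turning it into $[m,t\cdot e,[t\cdot m',(tt')\cdot e,(tt')\cdot m'']]$; the two sides then coincide by heap associativity (axiom (1)).

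For left distributivity, write $[(t_1,m_1),(t_2,m_2),(t_3,m_3)]=([t_1,t_2,t_3],[m_1,m_2,m_3])$. The first component of $(t,m)[(t_1,m_1),(t_2,m_2),(t_3,m_3)]$ matches the right-hand side by left distributivity in $T$. For the second components I need
\[
[m,t\cdot e,t\cdot[m_1,m_2,m_3]]=[[m,t\cdot e,t\cdot m_1],[m,t\cdot e,t\cdot m_2],[m,t\cdot e,t\cdot m_3]],
\]
which follows by first applying the module law $t\cdot[m_1,m_2,m_3]=[t\cdot m_1,t\cdot m_2,t\cdot m_3]$ and then using that the translation $x\mapsto[m,t\cdot e,x]$ is a heap homomorphism, a standard abelian-heap identity that is again transparent in a retract.

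The main obstacle is right distributivity. After expanding both sides, using right distributivity in $T$ on the first component and the module law $[t_1,t_2,t_3]\cdot x=[t_1\cdot x,t_2\cdot x,t_3\cdot x]$ (applied to $x=e$ and to $x=m$) on the second, the remaining second-component identity is
\[
[[m_1,m_2,m_3],[t_1\cdot e,t_2\cdot e,t_3\cdot e],[t_1\cdot m,t_2\cdot m,t_3\cdot m]]=[[m_1,t_1\cdot e,t_1\cdot m],[m_2,t_2\cdot e,t_2\cdot m],[m_3,t_3\cdot e,t_3\cdot m]].
\]
This is exactly the entrywise (\emph{medial}, or interchange) law $[[a_1,a_2,a_3],[b_1,b_2,b_3],[c_1,c_2,c_3]]=[[a_1,b_1,c_1],[a_2,b_2,c_2],[a_3,b_3,c_3]]$ for abelian heaps, and it is precisely the step where commutativity of the heap (axiom (3)) is indispensable: in a retract both sides reduce to one and the same alternating sum of the nine entries, and this matching uses commutativity of the underlying group in an essential way. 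This is the conceptual reason the construction lands among (commutative-heap) trusses and, as the paper stresses, cannot in general be a ring. Establishing this medial law — either directly from the heap axioms or via a retract — is the crux; once it is available, all five truss conditions together with associativity of the multiplication are verified and the proof is complete.
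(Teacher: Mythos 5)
Your proposal is correct and follows essentially the same route as the paper's own proof: a componentwise verification in which associativity reduces to the module axioms plus heap associativity, and each distributive law reduces, after applying distributivity in $T$ and in $M$, to an entrywise rearrangement of abelian-heap expressions (the medial law), which is exactly the step the paper justifies by the phrase ``since $M$ is an abelian heap''. The only cosmetic difference is that you verify the auxiliary heap identities by passing to a retract while the paper manipulates the heap axioms (Mal'cev idempotence plus commutativity) directly; these are equivalent, standard justifications.
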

\begin{proof}
That $T\times M$ with given operations is a truss can be checked by direct calculations. We start with the associative law. For all $t,t',t''\in T$ and $m,m',m''\in M$,
$$
\begin{aligned}
(t,m)((t',m')(t'',m''))&=(t,m)\left(t't'',[m',t'\cdot e,t'\cdot m'']\right)\\
&=\left(tt't'', [m,t\cdot e,t\cdot [m',t'\cdot e,t'\cdot m'']]\right)\\ 
&=\left(tt't'', [[m,t\cdot e,t\cdot m'],tt'\cdot e,tt'\cdot m'']\right)\\ 
&=(tt',[m,t\cdot e,t\cdot m'])(t'',m'')=((t,m)(t',m'))(t'',m''),
\end{aligned}
$$
where the third equality follows by the distributive and associative laws for modules over trusses and by the associativity of the heap operation. To prove the left distributive law we compute, for all $t,t',t'', t'''\in T$ and $m,m',m'',m'''\in M$: 
$$
\begin{aligned}
(t,m)[(t',m'),&(t'',m''),(t''',m''')] =(t,m)([t',t'',t'''],[m',m'',m'''])\\ 
&=(t[t',t'',t'''],[m,t\cdot e,t\cdot [m',m'',m'''])\\ 
&=([tt',tt'',tt'''], [[m,m,m],[t\cdot e,t\cdot e,t\cdot e],[t\cdot m',t\cdot m'',t\cdot m''']])\\ 
&=([tt',tt'',tt'''],[[m,t\cdot e,t\cdot m'],[m,t\cdot e,t\cdot m''],[m,t\cdot e,t\cdot m''']])\\ 
&=[(t,m)(t',m'),(t,m)(t'',m''),(t,m)(t''',m''')].
\end{aligned}
$$
The third equality follows by the distributive laws for trusses and modules over trusses and by the Mal'cev identities (which imply that the heap operation is an idempotent operation). The rearrangement of brackets leading to the fourth equality is possible since $M$ is an abelian heap.
Similarly, for the right distributivity,
$$
\begin{aligned}[]
[(t',m'),&(t'',m''),(t''',m''')](t,m) =\left([t',t'',t'''],[m',m'',m''']\right)(t,m)\\
 &=([t't,t''t,t'''t], [[m',m'',m'''],[t',t'',t''']\cdot e ,[t',t'',t''']\cdot m])\\ 
 &=([t't,t''t,t'''t], [[m',m'',m'''],[t'\cdot e,t''\cdot e,t'''\cdot e] ,[t'\cdot m,t''\cdot m,t'''\cdot m]])\\ 
   &=([t't,t''t,t'''t], [[m',t'\cdot e,t'\cdot m],[m'',t''\cdot e,t''\cdot m],[m''',t'''\cdot e,t'''\cdot m]])\\ 
&= [(t',m')(t,m),(t'',m'')(t,m),(t''',m''')(t,m)].
\end{aligned}
$$
Here, as in the preceding computation, the third equality is obtained by the distributive laws, while the fourth one follows from the fact that $M$ is an abelian heap.
Since the operation \eqref{prod.ext} is associative and distributes from both sides over the heap operation in $T\times M$, $T[M;e]$ is a truss, as claimed.
\end{proof}

A natural question that arises here is whether $T[M;e]$ can be a truss  associated with a ring.

\begin{lemma}\label{lem.ext.ring}
The truss $T[M;e]$  is ring-type if and only if $M=\{e\}$ and $T=\tT(R)$ for some ring $R$.
\end{lemma}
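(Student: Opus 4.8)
The plan is to prove the two implications separately, the substantive content lying entirely in the forward direction; the converse is a one-line verification. For the converse, assume $M=\{e\}$ and $T=\tT(R)$. Since $M$ is a singleton, the action necessarily satisfies $t\cdot e=e$ for all $t\in T$, and $R$ contributes the absorber $0=0_R$ of $T$, so that $0t'=t'0=0$. I would then check directly that $(0,e)$ is an absorber of $T[M;e]$: substituting into \eqref{prod.ext} and using $0t'=t'0=0$ together with the Mal'cev identity $[e,x,x]=e$ gives $(0,e)(t',m')=(0,[e,0\cdot e,0\cdot m'])=(0,e)$ and symmetrically $(t',m')(0,e)=(0,e)$ for every $(t',m')$. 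Hence $T[M;e]$ is ring-type.

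For the forward direction, suppose $T[M;e]$ has an absorber $(z,n)\in T\times M$, so that $(z,n)(t',m')=(z,n)=(t',m')(z,n)$ for all $(t',m')$. First I would read off the $T$-components from \eqref{prod.ext}: these give $zt'=z$ and $t'z=z$ for all $t'\in T$, so $z$ is a (necessarily unique) absorber of $T$. Thus $T$ is ring-type, i.e.\ $T=\tT(R)$ for the ring $R$ whose zero is $z$.

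The heart of the argument is showing $M=\{e\}$. Reading off the $M$-component of the identity $(t',m')(z,n)=(z,n)$ yields
$$
[m',\,t'\cdot e,\,t'\cdot n]=n \qquad \mbox{for all } m'\in M,\ t'\in T.
$$
Fix any $t'$ (for instance $t'=z$) and put $a=t'\cdot e$, $b=t'\cdot n$. The map $\phi\colon M\to M$, $\phi(m')=[m',a,b]$, is a bijection: its inverse is $x\mapsto[x,b,a]$, since by associativity and the Mal'cev identities $[[m',a,b],b,a]=[m',a,[b,b,a]]=[m',a,a]=m'$. But the displayed identity asserts that $\phi$ is constant with value $n$, which contradicts injectivity unless $M$ consists of a single element. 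As $e\in M$, this forces $M=\{e\}$, completing the forward direction.

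The main obstacle is precisely this last step: one must notice that the heap operation is injective (indeed bijective) in its first slot, so a module element ranging freely over $M$ on the left of an absorbing identity is consistent only when the module is trivial. Everything else reduces to unwinding the multiplication \eqref{prod.ext} componentwise and invoking the Mal'cev identities.
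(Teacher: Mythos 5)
Your proof is correct and follows essentially the same route as the paper: both directions are handled by componentwise analysis of the absorber equations from \eqref{prod.ext}, first extracting the absorber of $T$ and then using the $M$-component identity to force $M=\{e\}$. The only (cosmetic) difference is in the last step, where you invoke bijectivity of the heap translation $m'\mapsto [m',a,b]$ to see that a constant translation forces $|M|=1$, whereas the paper reaches the same conclusion by successive specializations ($m=t\cdot e$, then $m=e$) — an equivalent use of heap cancellation, so your version is a slightly slicker packaging of the same argument.
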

\begin{proof}
Let $T[M;e]$ be a ring-type truss and $(t',m')$ be an absorber in $T[M;e]$. Then, for all $m\in M$ and $t\in T$,
$$
(t't,[m',t'\cdot e,t'\cdot m]) =(t', m')(t,m)= (t',m')=(t, m)(t', m')=(tt',[m,t\cdot  e,t\cdot m']),
$$
which immediately implies that $t'$ is the absorber in $T$. 
Therefore, $T=\tT(R)$, where $R$ has the same multiplication as $T$ and the abelian group structure obtained as the $t'$-retract of $(T,[-,-,-])$.

Observe that $[m,t\cdot e,t\cdot m']=m'$ implies $t\cdot m'=[t\cdot e,m,m']$, so choosing $m = t\cdot e$ we obtain $t\cdot m' = m'$, for all $t\in T$, i.e.\ $m'$ is an absorber in $M$. Hence $m' = [t\cdot e,m,m']$, for all $m\in M$. In particular, for $m=e$, $t\cdot e =e$.
 Therefore, for all  $m\in M$, 
$$
[e,m,m']= m',
$$
 which  implies that $m=e$. So if the truss $T[M;e]$  is ring-type, then   $M=\{e\}$. 
 
 The converse implication follows by the simple observation that $\tT(R)\cong \tT(R)[\{e\};e]$.
\end{proof}

Put differently, Lemma~\ref{lem.ext.ring} asserts that the truss obtained by extension by a non-trivial module is never a truss associated to a ring.

We now list properties of an extension truss.
\begin{theorem}\label{thm.mod}
Let $T$ be a truss, $M$ be a left $T$-module and let $e\in M$.
\begin{zlist}
\item For any $\bar{e}\in M$, $T[M;e]\cong T[M;\bar{e}]$.
\item  $M$ is a left  $T[M;e]$-module with the action, for all $m,m'\in M$ and $t\in T$,
$$
(t,m)\cdot m'=[m,t\cdot e,t\cdot m'].
 $$
In particular, $(t,m)\cdot e =m$.
\item The induced actions of $T[M;e]$ on $M$ coincide with the induced actions of $T$ on $M$, i.e.\ for all ${\bar{e}\in M}$
$$
(t,m)\cdot_{\bar{e}}m'= t\cdot_{\bar{e}}m'.
$$ 
In particular, if $\bar{e}$ is an absorber in the $T$-module ${M}$, then  $(t,m)\cdot_{\bar{e}}m'=t\cdot m'$.
 \item \label{to.truss} For all $a\in T$, the sub-heap 
 $
 M_a := \{a\}\times M$ 
 is a paragon in $T[M;e]$. Furthermore,
 $$
 T[M;e]/{M_a} \cong T.
 $$
 $M_a$ is an ideal in $T[M;e]$ if and only if $a$ is an absorber in $T$.
 \item \label{to.mod} The sub-heap 
 $T_e := T\times \{e\}$ is a sub-truss and a left paragon of $T[M;e]$. Furthermore,
 $$
 T[M;e]/{T_e} \cong M,
 $$
 as left $T[M;e]$-modules.
  \item \label{unitality}
The extension truss $T[M;e]$ is unital if and only if $T$ is a unital truss and $M$ is a unital module. Furthermore, $\uU(T[M;e]) = \uU(T)\times M$.
\end{zlist}
\end{theorem}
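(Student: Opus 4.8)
The statement bundles six largely independent facts about $T[M;e]$, so the plan is to dispatch them in turn, reusing the second-coordinate computations already carried out in the proof of Theorem~\ref{thm.ext}. For (1) I would exhibit an explicit isomorphism rather than argue abstractly. The translation $\tau_e^{\bar e}$ suggests the candidate
$$
\Phi\colon T[M;e]\to T[M;\bar e],\qquad (t,m)\mapsto \bigl(t,[m,t\cdot e,t\cdot\bar e]\bigr).
$$
That $\Phi$ is a heap isomorphism is routine: it is the identity on the first factor, its second component respects the product heap by the exchange law in the abelian heap $M$ together with distributivity of the action over $[-,-,-]$, and interchanging $e$ with $\bar e$ produces a two-sided inverse. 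The only real content is multiplicativity, which I would verify by expanding both sides of $\Phi((t,m)(t',m'))=\Phi(t,m)\,\Phi(t',m')$ and collapsing the repeated $t\cdot\bar e$ terms using associativity of the action and the Mal'cev identities.

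Parts (2) and (3) I would read off from Theorem~\ref{thm.ext}. For (2) the three module axioms are precisely the second-coordinate identities established there when proving that \eqref{prod.ext} is associative and bidistributive, and $(t,m)\cdot e=[m,t\cdot e,t\cdot e]=m$ is immediate from the Mal'cev identities. For (3) I would expand the induced action $(t,m)\cdot_{\bar e}m'=[(t,m)\cdot m',(t,m)\cdot\bar e,\bar e]$ using (2) and cancel the common prefix $[m,t\cdot e,-]$ by abelian-heap rearrangement; what remains is exactly $[t\cdot m',t\cdot\bar e,\bar e]=t\cdot_{\bar e}m'$, and the absorber case follows on setting $t\cdot\bar e=\bar e$.

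Parts (4) and (5) I would obtain from the two coordinate projections. The first projection $\pi_1\colon(t,m)\mapsto t$ is a surjective truss morphism, so $M_a=\pi_1^{-1}(a)$ is an $a$-kernel, hence a paragon, and the first isomorphism theorem gives $T[M;e]/M_a\cong T$; the ideal criterion follows by noting that $(t,m)(a,p)$ and $(a,p)(t,m)$ stay in $M_a$ for all $t$ precisely when $ta=at=a$, i.e.\ when $a$ is an absorber. For (5) the second projection $\pi_2\colon(t,m)\mapsto m$ intertwines the left regular action of $T[M;e]$ with the action of (2), since $\pi_2((t,m)(t',m'))=[m,t\cdot e,t\cdot m']=(t,m)\cdot\pi_2(t',m')$; thus $\pi_2$ is a surjective module morphism with $\pi_2^{-1}(e)=T_e$, and the module first isomorphism theorem yields $T[M;e]/T_e\cong M$. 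That $T_e$ is a sub-truss follows from the Mal'cev identities, and a one-line computation shows it is closed under $\lambda^q$ (but not, in general, $\varrho^q$), so it is a left paragon.

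Finally, for (6) the natural guess is that the identity is $(1,e)$ with $1$ the unit of $T$; checking $(1,e)(t,m)=(t,m)=(t,m)(1,e)$ uses unitality of $M$ on the left and the Mal'cev identities on the right. I expect the converse to be the one genuinely delicate point. Writing a putative identity as $(u,n)$, first coordinates force $u=1$ and the left-identity condition at $m=e$ forces $n=e$ by Mal'cev; it then remains to show $M$ is unital. The step I would rely on is to substitute $1\cdot e$ into the surviving constraint $[e,1\cdot e,1\cdot m]=m$ and invoke associativity of the action, $1\cdot(1\cdot e)=1\cdot e$, which in the $e$-based retract forces $1\cdot e=e$ and hence $1\cdot m=m$. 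The unit description is then easy: any inverse of $(t,m)$ must have first coordinate inverse to $t$, so $\uU(T[M;e])\subseteq\uU(T)\times M$; conversely, for $t\in\uU(T)$ one solves $[m,t\cdot e,t\cdot p]=e$ explicitly for $p$ using that $t$ acts invertibly, producing a two-sided inverse and giving $\uU(T[M;e])=\uU(T)\times M$.
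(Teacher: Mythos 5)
Your proof is correct, and parts (2), (3) and (6) follow essentially the paper's own route: the module axioms in (2) are indeed read off the second-coordinate computations of Theorem~\ref{thm.ext}, your cancellation in (3) is the paper's, and in (6) you reproduce the same chain of forced equalities ($T$ unital with identity $u$, then $n=e$, then $1\cdot e=e$, then $1\cdot m=m$) and the same inverse formula $(t^{-1},[e,t^{-1}\cdot m,t^{-1}\cdot e])$. You diverge in two places. In part (1) your map $(t,m)\lto (t,[m,t\cdot e,t\cdot\bar e])$ differs from the paper's $\id_T\times\tau_e^{\bar e}\colon (t,m)\lto (t,[m,e,\bar e])$; both are valid isomorphisms (your multiplicativity computation collapses as you say, and the heap-morphism property follows from the exchange law in the abelian heap $M$ plus distributivity of the action over the heap operation of $T$), but they are genuinely different maps --- for $T=\tT(R)$ acting on itself with $e=0$, $\bar e=1$, yours is $(t,m)\lto(t,m+t)$ while the paper's is $(t,m)\lto(t,m+1)$ --- and the paper's is marginally simpler since it never involves the action. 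The more substantive difference is in parts (4) and (5): the paper verifies the paragon and left-paragon conditions by direct computation and then constructs mutually inverse maps $\varphi$, $\psi$ between the quotient and $T$ (resp.\ $M$), checking well-definedness via an explicit description of the sub-heap relation; you instead realise $M_a=\pi_1^{-1}(a)$ as an $a$-kernel of the truss morphism $\pi_1\colon(t,m)\lto t$, and $T_e=\pi_2^{-1}(e)$ as a point preimage under the $T[M;e]$-module morphism $\pi_2\colon(t,m)\lto m$ (correctly noting that an induced submodule of the left regular module is precisely a left paragon), and then invoke first isomorphism theorems. This is cleaner and delivers the paragon property and the isomorphism in one stroke; the one step you should make explicit is that the sub-heap relation $\sim_{M_a}$ (resp.\ $\sim_{T_e}$) coincides with the kernel congruence of $\pi_1$ (resp.\ $\pi_2$): for a point preimage $P=f^{-1}(a)$ of a heap morphism $f$ one has $x\sim_P y$ iff $[f(x),f(y),a]=a$ iff $f(x)=f(y)$. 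This two-line identification is exactly what the paper's equation \eqref{class} establishes by hand, and without it the isomorphism theorem (stated for quotients by congruences) does not literally apply to $T[M;e]/M_a$ as defined via $\sim_{M_a}$.
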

\begin{proof}
(1) Consider the heap automorphism:
$$
\Theta = \id_T\times \tau_e^{\bar{e}} : T\times M\lra T\times M, \qquad (t,m)\lto (t,[m,e,\bar{e}]).
$$
We will show that $\Theta$ is a truss isomorphism from $T[M;e]$ to $T[M;\bar{e}]$. Let us take any $t,t'\in T$ and $m,m'\in M$, and compute
$$
\begin{aligned}
\Theta(t,m)\Theta(t',m') &= \left( t, \left[m,e,\bar{e}\right]\right)\left( t', \left[m',e,\bar{e}\right]\right)\\
&= \left(tt', \left[\left[m,e,\bar{e}\right], t\cdot \bar{e}, t\cdot \left[m',e,\bar{e}\right]\right]\right)\\
&= \left(tt',\left[\left[m,e,\bar{e}\right],t\cdot \bar{e},  \left[t\cdot m',t\cdot e,t\cdot \bar{e}\right]\right]\right)\\
&= \left(tt',\left[\left[t\cdot m',t\cdot e,t\cdot \bar{e}\right],t\cdot \bar{e},  \left[m,e,\bar{e}\right] \right]\right)\\
&= \left(tt', \left[\left[t\cdot m',t\cdot e, m\right],e,\bar{e}\right]\right)\\
&= \left(tt', \left[\left[m,t\cdot e, t\cdot m'\right],e,\bar{e}\right]\right) = \Theta\left((t,m)(t',m')\right),
\end{aligned}
$$
where the third equality follows by the left distributive law for actions. The fourth and sixth equalities are consequences of the fact that $M$ is an abelian heap. The key cancellation and rearrangement of brackets leading to the fifth equality result from the associative laws for and Mal'cev properties of heap operations. Thus $\Theta$ is the required isomorphism of trusses.\smallskip

(2) The proof of the associative and distributive laws for $M$ as a $T[M;e]$-module follow by the same chains of arguments as that in the proof of Theorem~\ref{thm.ext} for  the corresponding laws for the truss $T[M;e]$, and thus are left to the reader. The property $(t,m)\cdot e =m$ follows immediately by the Mal'cev identity.
\smallskip

(3) For the first statement, observe that
$$
\begin{aligned}
(t,m)\cdot_{\bar{e}}m' &=[(t,m)\cdot m',(t,m)\cdot\bar{e},\bar{e}]\\
&=[[m,t\cdot e,t\cdot m'],[m,t\cdot e,t\cdot \bar{e}],\bar{e}]=[t\cdot m',t\cdot \bar{e},\bar{e}] = t\cdot _{\bar{e}} m',
\end{aligned}
$$
by the Mal'cev identities, associativity of the heap operation and by the fact that $M$ is an abelian heap. The second statement follows immediately by the Mal'cev identity. \smallskip

(4) For all $t\in T$, $m,m',m''\in M$,
$$
\begin{aligned}
\left[(t,m)(a,m'), (t,m)(a,e), (a,e)\right] &= \left[\left(ta, \left[m,t\cdot e,t\cdot m'\right]\right), \left(ta, \left[m,t\cdot e,t\cdot e\right]\right), (a,e)\right]\\
&= \left(\left[ta,ta,a\right], \left[\left[m,t\cdot e,t\cdot m'\right], m,e \right]\right)\\
&= \left(a, \left[t\cdot m',t\cdot  e, e\right]\right)\in M_a,
\end{aligned}
$$
by the Mal'cev identities and the associativity of heap operations. Hence $M_a$ is a left paragon. The right paragon property of $M_a$ is proven in a similar way.

Consider the following maps
$$
\begin{aligned}
\varphi &:T\lra T[M;e]/{M_a}, \qquad t\lto \overline{(t,e)},\\
\psi &:  T[M;e]/{M_a} \lra T, \qquad \overline{(t,m)}\lto t.
\end{aligned}
$$
The map $\varphi$ is the quotient of the heap map $\tilde{\varphi}: T\lra T[M;e]$, given by $t\lto (t,e)$. Note that,  for all $t,t'\in T$,
$$
\tilde{\varphi}(t)\tilde{\varphi}(t') =(t,e)(t',e) =(tt', [e,t\cdot  e, t\cdot e]) = (tt',e) = \tilde{\varphi}(tt'),
$$
i.e.\ $\tilde{\varphi}$ is a truss homomorphism, and thus so is $\varphi$.

We need to check whether the map $\psi$ is well-defined. By the definitions of $M_a$ and the sub-heap relation, $(t,m)\sim_{M_a}(t',m')$ if and only if there exist $m'',m'''\in M$ such that
\begin{equation}\label{class}
(a,m''') = \left[(t,m), (t',m'), (a,m'')\right] = \left([t,t',a],[m,m',m'']\right).
\end{equation}
Thus, in particular $a= [t,t',a]$ which implies $t'=t$. Therefore, the element $t$ is fully determined by the class of $(t,m)$. This means that the function $\psi$ is well-defined. The second consequence of \eqref{class} is that the class of $(t,m)$ is fully determined by $t$, i.e.\ it does not depend on the choice of $m$. This implies that the composite function $\varphi\circ \psi$ is the identity. That the composite $\psi\circ\varphi$ is identity is obvious.

The fact that $M_a$ is an ideal in $T[M;e]$ if and only if $a$ is an absorber in $T$ follows immediately from the analysis of the first entries in the products $(t,m)(a,m')$ and $(a,m)(t,m')$. \smallskip

(5) It is obvious that $T_{e}$ is closed under the ternary heap operation. That $T_e$ is closed under the multiplication as well follows immediately by the Mal'cev identity. Let us take $(t'',m)\in T[M;e]$ and $(t',e),(t,e)\in T_e$,  and use the definition of the truss operations for $T[M;e]$ and Mal'cev identities to compute:
$$
\begin{aligned}
{}[(t'',m)(t',e),(t'',m)(t,e),(t,e)] &=[(t''t',[m,t''\cdot e,t''\cdot e]),(t''t,[m,t''\cdot e,t''\cdot e]),(t,e)]\\ 
&=([t''t',t''t,t], [m,m,e]) =([t''t',t''t,t], e)\in T_e.
\end{aligned}
$$
Therefore, $T_{e}$ is a left paragon hence a left induced $T[M;e]$-submodule of $T[M;e]$. The left $T[M;e]$-module isomorphism $T[M;e]/T_e \lra M$ is constructed in the similar way to the isomorphism in part (4). In particular, one finds that, for all $t,t'\in T$ and $m,m'\in M$,
\begin{blist}
\item $(t,e)\sim_{T_e}(t',e)$ and
\item if $(t,m)\sim_{T_e}(t',m')$, then $m=m'$.
\end{blist}
Hence we can fix $a\in T$ and consider the following functions:
$$
\begin{aligned}
\varphi &:M\lra T[M;e]/{T_e}, \qquad m\lto \overline{(a,m)},\\
\psi &:  T[M;e]/{T_e} \lra M, \qquad \overline{(t,m)}\lto m.
\end{aligned}
$$
The map $\varphi$ is the quotient of a heap homomorphism $m\lto (a,m)$ hence a heap homomorphism. Furthermore, using observation (a) one can compute
$$
\begin{aligned}
(t,m)\cdot \varphi(m') &= \overline{(t,m) (a,m')}
 = \overline{(ta, [m,t\cdot e,t\cdot m'])}\\
 &= \overline{(a, [m,t\cdot e,t\cdot m'])} = \overline{(a, (t,m)\cdot m')} = \varphi((t,m)\cdot m').
 \end{aligned}
 $$
 Hence $\varphi$ is a homomorphism of $T[M;e]$-modules. That $\psi$ is well-defined follows by the observation (b). Again (a) implies that $\varphi\circ\psi = \id$ and the other inverse property is obvious.
\smallskip 

(6) If $T$ has identity 1, then $(1,e)$ is the identity for the extended truss $T[M;e]$ by the Mal'cev properties and by the unitality of $M$. Conversely, if $(a,\bar{e})$ is the identity of $T[M;e]$, then, for all $(t,m) \in T[M;e]$,
\begin{subequations}\label{id}
\begin{equation}\label{id.1}
(t,m) = (a,\bar{e})(t,m) = (at, [\bar{e},a\cdot e,a\cdot m]),
\end{equation}
\begin{equation}\label{id.2}
(t,m) = (t,m)(a,\bar{e}) = (ta, [m,t\cdot e,t\cdot\bar{e}]).
\end{equation}
\end{subequations}
Comparison of the first elements in each pair in equalities \eqref{id} yields that $T$ is unital with the identity $1=a$. Evaluation of \eqref{id.1} at $m=e$ produces the equality $\bar{e} =e$, while its evaluation at $m=1\cdot e$ gives $\bar{e} = 1\cdot e$, and hence $1\cdot m =m$, for all $m\in M$, again by \eqref{id.1}.

Let $u$ be a unit in $T$. Then, for all $m\in M$,
$$
\begin{aligned}
(u,m)(u^{-1}, [e, u^{-1}\!\cdot m, u^{-1}\!\cdot e]) &= (uu^{-1}, [m, u\cdot e, u\cdot [e, u^{-1}\!\cdot m, u^{-1}\!\cdot e]])\\
&= (1,[[m, u\cdot e,  u\cdot e], uu^{-1}\!\cdot m, uu^{-1}\!\cdot e]) ] = (1,e),
\end{aligned}
$$
by the distributive and associative laws for modules, axioms of heaps and unitality of $M$. In a similar way, by the axioms of a heap
$$
\begin{aligned}
(u^{-1}, [e, u^{-1}\!\cdot m, u^{-1}\!\cdot e])(u,m) = (u^{-1}u, [[e, u^{-1}\!\cdot m, u^{-1}\!\cdot e],  u^{-1}\!\cdot e, u^{-1}\!\cdot m]) = (1,e).
\end{aligned}
$$
Hence, if $u$ is a unit in $T$, $(u,m)$ is a unit in $T[M;e]$, for all $m\in M$.
This proves the inclusion $\uU(T)\times M\subseteq \uU(T[M;e])$. The converse inclusion follows immediately from the definition of the product in $T[M;e]$.
\end{proof}

\begin{remark}\label{rem.ext.seq}
Assertions \eqref{to.truss} and \eqref{to.mod} of Theorem~\ref{thm.mod} yield the following  sequence, for all $a\in T$,
$$
\xymatrix{ M\ar@{^{(}->}[rr]^-{\iota_a} & & T[m;e] \ar@{->>}@<.7ex>[rr]^-{\pi} && T\ar@{_{(}->}@<.8ex>[ll]^-{j} ,
}
$$
where $\iota_a: m\lto (a,m)$, $j: t\lto (t,e)$ and $\pi: (t,m)\lto t$. This sequence is a split-exact sequence of trusses in the following sense. The map $\pi$ is a split epimorphism of trusses ($j$ is the splitting monomorphism) and the relation induced by the image of $\iota_a$ is the kernel relation for $\pi$. In summary, Theorem~\ref{thm.ext} describes a \textbf{split extension of trusses}.
\end{remark}

The assertion \eqref{unitality} of Theorem~\ref{thm.mod} implies the following
\begin{corollary}\label{cor.ext.brace}
An extension truss $T[M;e]$ is a truss associated to a brace if and only if $T$ is associated to a brace and $M$ is a unital $T$-module.
\end{corollary}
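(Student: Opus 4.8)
The plan is to reduce everything to the dictionary recorded in Section~\ref{sec.braces}: a truss is associated to a (two-sided) brace precisely when it is unital and every one of its elements is a unit, i.e.\ when it is a group under its multiplication (see \cite[Corollary~3.10]{Brz:par}). Once the phrase ``associated to a brace'' is reformulated as ``group under multiplication'', the statement becomes an immediate consequence of assertion~\eqref{unitality} of Theorem~\ref{thm.mod}, which already records both when $T[M;e]$ is unital and the exact shape $\uU(T[M;e]) = \uU(T)\times M$ of its group of units. So no new computation is needed; the work is purely in assembling those facts.

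For the forward implication I would assume $T[M;e]$ is associated to a brace, so that $T[M;e]$ is unital and $\uU(T[M;e]) = T\times M$, the whole truss. By Theorem~\ref{thm.mod}\eqref{unitality} the unitality of $T[M;e]$ forces $T$ to be unital and $M$ to be a unital $T$-module, and moreover $\uU(T[M;e]) = \uU(T)\times M$. Comparing the two descriptions of the set of units gives $\uU(T)\times M = T\times M$; since $M$ is non-empty (it contains $e$), projecting onto the first factor yields $\uU(T)=T$. Thus $T$ is a group under its multiplication, hence associated to a brace, and $M$ is unital, as required.

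For the converse I would assume $T$ is associated to a brace, so that $T$ is unital with $\uU(T)=T$, and that $M$ is a unital $T$-module. Theorem~\ref{thm.mod}\eqref{unitality} then gives that $T[M;e]$ is unital and that $\uU(T[M;e]) = \uU(T)\times M = T\times M$, which is all of $T[M;e]$. Hence every element of $T[M;e]$ is a unit, so $T[M;e]$ is a group under its multiplication and therefore is associated to a brace.

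The argument carries no genuine obstacle beyond invoking the brace/group-of-units correspondence; the only point requiring a moment's care is the non-emptiness of $M$ in the forward direction, which is what licenses cancelling $\uU(T)\times M = T\times M$ down to $\uU(T)=T$. All the substantive content---associativity, two-sided distributivity, and above all the precise identification $\uU(T[M;e])=\uU(T)\times M$---has already been established in Theorem~\ref{thm.mod}, so this corollary amounts to combining those conclusions with the definition of a brace-type truss.
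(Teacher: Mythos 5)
Your proof is correct and follows essentially the same route as the paper's: both reduce ``associated to a brace'' to ``unital with every element a unit'' via \cite[Corollary~3.10]{Brz:par} and then apply Theorem~\ref{thm.mod}\eqref{unitality}, reading off $\uU(T[M;e]) = \uU(T)\times M = T\times M$ if and only if $\uU(T)=T$. Your explicit remark that non-emptiness of $M$ licenses the cancellation is a small point the paper leaves implicit, but the argument is the same.
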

\begin{proof}
By Theorem~\ref{thm.mod}~\eqref{unitality} $T[M;e]$ is brace-type (unital) if and only if $T$ is brace-type. Furthermore, $\uU(T[M;e]) = T[M;e]$ if, and only if $\uU(T) = T$, i.e.\ $T[M;e]$ is a multiplicative group (brace) if and only if $T$ is as well.
\end{proof}

It might be instructive to contrast Lemma~\ref{lem.ext.ring} with Corollary~\ref{cor.ext.brace}. While only trivial truss extension of a ring results in a ring (alas an extension in name only), an extension of a brace by any unital module is a brace.

\begin{example}\label{ex.ext.brace}
Let $B$ be a brace. Then $\tT(B)$ is a left module over itself by multiplication, hence one can consider the extension truss $\tT(B)[\tT(B);1]$. As a heap,
$$
\tT(B)[\tT(B);1] = \tT(B)\times \tT(B).
$$
The multiplication comes out as, for all $b_1,b_2, b'_1,b'_2\in B$,
\begin{equation}\label{prod.brace}
(b_1,b_2)(b'_1,b'_2) = (b_1b'_1, b_2 -b_1 + b_1b_2').
\end{equation}
$\tT(B)[\tT(B);1]$ is the truss associated to the brace with additive structure given by $B\oplus B$ and multiplication given by the formula \eqref{prod.brace}.  Note that, even if the brace $B$ is abelian (i.e.\ the truss $\tT(B)$ is commutative), the extended brace need not be so. 

For an explicit example we may consider the brace obtained as the $0$-retract of the truss $\ZZ^{(2)}/2^{k+1}\ZZ$ in Proposition~\ref{prop.z,mod2n}. The multiplication in $\ZZ^{(2)}/2^{k+1}\ZZ[\ZZ^{(2)}/2^{k+1}\ZZ;0]$ is given by
$$
(m,s)(n,t) = \left(2mn +m +n \!\! \mod 2^{k+1}\, ,\, 2mt +s+t \!\! \mod 2^{k+1}\right).
$$
In particular, the multiplicative 
group of $\ZZ^{(2)}/4\ZZ[\ZZ^{(2)}/4\ZZ;0]$ is generated by elements
$$
a=(0,1), \qquad x=(1,0), \qquad y=(2,0),
$$
which satisfy the following relations
$$
a^4 = x^2 = y^2 = (0,0), \quad xax=a^3, \quad xy=yx, \quad ay=ya,
$$
and hence it is isomorphic to the direct product of the dihedral group $D_8$ and the cyclic group $C_2$. The additive structure of the brace associated to $\ZZ^{(2)}/4\ZZ[\ZZ^{(2)}/4\ZZ;0]$ is that of $C_4\oplus C_4$.

Since the socle, $\mathrm{Soc}(B)$, is an ideal in a brace $B$, it is a left (induced) $T(B)$-module. The multiplication on $\tT(B)[\mathrm{Soc}(B);1] = \tT(B)\times \mathrm{Soc}(B)$ derived from \eqref{prod.brace} reduces to
$$
(b,a)(b',a') = (bb', a-b+ba) = (bb', a+bab^{-1}).
$$
\end{example}

\begin{remark}\label{rem.iter}
In view of Theorem~\ref{thm.mod}, since $M$ is a module over the extended truss $T[M;e]$, the extension procedure could be iterated. This, however, rather than producing genuinely new examples boils down to the truss extension by the $T$-module obtained as the product of $T$-modules. 
\end{remark}

We conclude the paper with the following
\begin{remark}\label{rem.left}
By analysing the proofs of main  statements of this section, that is Theorem~\ref{thm.ext} and Theorem~\ref{thm.mod}, one can easily convince oneself that the assertions hold for left trusses and their left modules (and also for right trusses if left modules are replaced by right ones). Since for a left truss $T$ no right distributivity is assumed, one does not assume that the action of $T$ on a left module $M$ right distributes over the heap operation on $T$ (otherwise $T$ would not be its own module). But this right distributivity is not needed neither for the associativity of the product in \eqref{prod.ext} in $T[M;e]$ nor for its left distributivity. Thus, if $T$ is a left truss and $M$ is a left $T$-module, then $T[M;e]$ with Cartesian product heap structure and with multiplication \eqref{prod.ext} is a left truss. Main assertions of Theorem~\ref{thm.ext} stand if  the words ``truss'' or ``paragon'' are qualified by the adjective ``left''. Most importantly, the one-sided version of Corollary~\ref{cor.ext.brace}, with no changes in the formula for the multiplicative group structure, equips one with the procedure of obtaining left braces from left braces. 

For example, if $(B,+,\cdot)$ is a left brace and $(B,\cdot)$ acts on an abelian group $M$ by automorphisms, i.e.\ $M$ is a left module over a brace in the sense of Rump (see e.g.\ \cite[Section~5]{Rum:gro}),  then $M$ is a left $\tT(B)$-module, and  by Corollary~\ref{cor.ext.brace} $T(B)[M;0]$ is a left truss associated to a left brace obtained as a special case of \cite[Theorem~3.3]{Bac:ext} (with the trivial cocycles and the right action of  $(B,\cdot)$ on $M$ given by the inverse of the left action). 
Note that not all modules over the truss $\tT(B)$ associated to a brace $B$ correspond to modules  over this brace, which indicates that the results presented in this section provide one with additional flexibility, and thus might lead to new examples of braces.
\end{remark}

\section*{Acknowledgement}
We would like to thank Martin Crossley for an interesting discussion about group structures in Proposition~\ref{prop.z,mod2n}.

\end{document}